\DeclareMathAlphabet{\cmcal}{OMS}{cmsy}{m}{n}
\definecolor{blue1}{rgb}{0, 0, 2}
\definecolor{sky}{rgb}{0, 0.2, 0.8}
\newtheorem{thm}{Theorem}[section]
\newtheorem{cor}[thm]{Corollary}
\newtheorem{lem}[thm]{Lemma}
\newtheorem{prop}[thm]{Proposition}
\theoremstyle{definition}
\newtheorem{defn}[thm]{Definition}
\theoremstyle{remark}
\newtheorem{rem}[thm]{\bf{Remark}}
\numberwithin{equation}{section} \numberwithin{table}{subsection}
\newtheorem*{thm*}{\bf{Theorem}}
\newtheorem*{claim*}{\bf{Claim}}
\newtheorem*{rem*}{\bf{Remark}}
\newtheorem*{rems*}{\bf{Remarks}}
\newtheorem*{exam*}{\bf{Example}}
\newtheorem*{exams*}{\bf{Examples}}
\newcommand{\F}{{\mathbb{F}}}
\newcommand{\Q}{{\mathbb{Q}}}
\newcommand{\T}{{\mathbb{T}}}
\newcommand{\Z}{{\mathbb{Z}}}
\newcommand{\m}{{\mathfrak{m}}}
\newcommand{\fn}{{\mathfrak{n}}}
\newcommand{\fs}{{\mathfrak{s}}}
\newcommand{\cC}{{\cmcal{C}}}
\newcommand{\cD}{{\cmcal{D}}}
\newcommand{\cE}{{\cmcal{E}}}
\newcommand{\cJ}{{\cmcal{J}}}
\newcommand{\cN}{{\cmcal{N}}}
\newcommand{\cT}{{\cmcal{T}}}
\newcommand{\cX}{{\cmcal{X}}}
\newcommand{\bfA}{\mathbf{A}}
\newcommand{\bfB}{\mathbf{B}}
\newcommand{\bfF}{\mathbf{F}}
\def\a{\alpha}
\def\b{\beta}
\def\d{\delta}
\def\g{\gamma}
\def\s{\sigma}
\def\S{\Sigma}
\def\<{\left\langle}
\def\>{\right\rangle}
\newcommand{\zmod}[1]{{\Z/{#1}\Z}}
\newcommand{\arinj}{\ar@{^(->}}
\newcommand{\arsurj}{\ar@{->>}}
\newcommand{\arsub}{\ar@{}[r]|-*[@]{\subset}}
\newcommand{\arsup}{\ar@{}[r]|-*[@]{\supset}}
\newcommand{\arcap}{\ar@{}[d]|-*[@]{\subset}}
\newcommand{\arcup}{\ar@{}[u]|-*[@]{\subset}}
\newcommand{\arin}{\ar@{}[u]|-*[@]{\in}}
\renewcommand{\pmod}[1]{{\,(\operatorname{mod}\hspace{0.7mm} {#1})}}
\newcommand{\Hom}{{\operatorname{Hom}}}
\newcommand{\Gal}{{\operatorname{Gal}}}
\newcommand{\Div}{{\operatorname{Div}}}
\newcommand{\End}{{\operatorname{End}}}
\newcommand{\Aut}{{\operatorname{Aut}}}
\newcommand{\sHom}{{\mathscr{H}\kern-.5pt om}}
\newcommand{\sExt}{{\mathscr{E}\kern-.5pt xt}}
\newcommand{\Frob}{{\operatorname{Frob}}}
\renewcommand{~}{\hspace*{0.3mm}}
\newcommand{\ts}{{\textsection}}
\newcommand{\ov}{\overline}
\mathchardef\hyp="2D
\newcommand{\xyv}[1]{\xymatrixrowsep{#1 pc}}
\newcommand{\xyh}[1]{\xymatrixcolsep{#1 pc}}
\newcommand{\qa}{{\quad \text{and} \quad}}
\newcommand{\mat}[4]{
 \left(  \begin{smallmatrix} #1 & #2 \\ #3 & #4 \end{smallmatrix} \right)}
\begin{document}                                                                          
\title[The action of the Hecke operators on the component groups]{The action of the Hecke operators on the component groups of modular Jacobian varieties}

\author{Taekyung Kim}
\address{IBS Center for Geometry and Physics,
Mathematical Science Building, Room 108,
Pohang University of Science and Technology,
77 Cheongam-ro, Nam-gu, Pohang, Gyeongbuk, Republic of Korea 37673}
\email{Taekyung.Kim.Maths@gmail.com}  

\author{~~Hwajong Yoo}
\address{IBS Center for Geometry and Physics,
Mathematical Science Building, Room 108,
Pohang University of Science and Technology,
77 Cheongam-ro, Nam-gu, Pohang, Gyeongbuk, Republic of Korea 37673}
\email{hwajong@gmail.com}                                                                    
\subjclass[2010]{Primary 11G05, 11G18, 14G35}    
\maketitle

\begin{abstract}
For a prime number $q\geq 5$ and a positive integer $N$ prime to $q$, Ribet proved the action of the Hecke algebra on the component group of the Jacobian variety of the modular curve of level $Nq$ at $q$ is ``Eisenstein'', which means the Hecke operator $T_\ell$ acts by $\ell+1$ when $\ell$ is a prime number not dividing the level. In this paper, we completely compute the action of the Hecke algebra on this component group by a careful study of supersingular points with extra automorphisms.
\end{abstract}



\section{Introduction}\label{sec: introduction}
Let $q\geq 5$ be a prime number, and let $N$ be a positive integer.
Let $X_0(Nq)$ denote the modular curve over $\Q$ and $J_0(Nq)$ its Jacobian variety. For any integer $n$, there is the Hecke operator $T_n$ acting on $J_0(Nq)$.
Let $\Phi_q(Nq)$ denote the component group of the special fiber $\cJ$ of the N\'eron model of $J_0(Nq)$ at $q$. 
According to the theorems of Ribet \cite{R88, R90} (when $q$ does not divide $N$) and Edixhoven \cite{Ed91} (in general), 
the action of the Hecke algebra on $\Phi_q(Nq)$ is ``Eisenstein.''
Here by ``Eisenstein'' we mean the Hecke operator $T_\ell$ acts on $\Phi_q(Nq)$ by $\ell+1$ when a prime number $\ell$ does not divide $Nq$.\footnote{On the other hand, Ribet and Edixhoven did not proceed to compute the action of the Hecke operator $T_p$ on $\Phi_q(Nq)$ for a prime divisor $p$ of the level $Nq$ because their results were enough for their applications.} In this article, we compute the action of the Hecke operators $T_\ell$ on the component group $\Phi_q(Nq)$ when $\ell$ divides $Nq$ and $q$ does not divide $N$. 

Here is an exotic example\footnote{this phenomenon cannot occur when the residual characteristic is greater than $3$} which leads us to this study: Let $N=\prod_{i=1}^\nu p_i$ be the product of distinct prime numbers with $\nu\geq 1$, and let $q\equiv 2 \text{ or } 5 \pmod 9$ be an odd prime number. Assume that $p_i \equiv 4 \text{ or } 7 \pmod 9$ for all $1 \leq i \leq \nu$. 
Let $\T(Nq)$ (resp. $\T(N)$) denote the $\Z$-subalgebra of $\End(J_0(Nq))$ (resp. $\End(J_0(N))$) generated by all the Hecke operators $T_n$ for $n\geq 1$. Let 
\[
\m:=(3, T_{p_i}-1, T_q+1, T_\ell-\ell-1 : \text{ for all } 1 \leq i \leq \nu, \text{ and for primes } \ell \nmid Nq \} \subset \T(Nq)
\]
and
\[
\fn:=(3, T_{p_i}-1, T_\ell-\ell-1 : \text{ for all } 1 \leq i \leq \nu, \text{ and for primes } \ell \nmid N \} \subset \T(N)
\]
be Eisenstein ideals. By \cite[Theorem 1.4]{Yoo3}, $\m$ is maximal. Furthermore, $\fn$ is maximal if and only if $\nu \geq 2$.
 
As observed by the second author \cite{Yoo10}, the dimension of $J_0(N)[\fn]=\nu$ if $\fn$ is maximal, i.e., $\nu \geq 2$. (Here $J_0(N)[\fn]:=\{ x \in J_0(N)(\ov{\Q}) : Tx =0 \text{ for all } T \in \fn \}$.)
It is an extension of $\mu_3^{\oplus {\nu-1}}$ by $\zmod 3$, 
and it does not contain a submodule isomorphic to $\mu_3$. On the other hand, the dimension of $J_0(Nq)[\m]$ is either $2\nu$ or $2\nu+1$. Furthermore $J_0(Nq)[\m]$ contains a submodule $\cN$ isomorphic to $J_0(N)[\fn]$, and it also contains $\mu_3^{\oplus \nu}$ (which is contributed from the Shimura subgroup). As $\cN$ is unramified at $q$, by Serre-Tate \cite{ST68}, $\cN$ maps injectively into $\cJ [\m]$ and it turns out that its image is isomorphic to $\cJ^0[\m]$, where $\cJ^0$ is the identity component of $\cJ$. 
(Note that $\Phi_q(Nq)$ is the quotient of $\cJ$ by $\cJ^0$.)
Since $\mu_3^{\oplus \nu}$ is also unramified at $q$, it maps into $\cJ[\m]$ and therefore its image maps injectively to $\Phi_q(Nq)[\m]$. (This statement is also true when $\nu=1$.) The structure of the component group $\Phi_q(Nq)$ is known by the work of Mazur and Rapoport \cite{MR77}\footnote{there are some minor errors in the paper, which are corrected by Edixhoven \cite[\ts 4.4.1]{Ed91}}: 
\[
\Phi_q(Nq)=\Phi \oplus (\zmod 3)^{2^\nu-1},
\]
where $\Phi$ is cyclic and generated by the image of the cuspidal divisor $(0)-(\infty)$. The action of the Hecke operators on $\Phi$ is well-known (e.g. \cite[Appendix A1]{Yoo2}), and so $\Phi[\m]=0$. Therefore $(\zmod 3)^{2^\nu-1}[\m] \neq 0$ and its dimension is at least $\nu$. Indeed it is equal to $2^{\nu-1}$, which can easily be computed by the theorems below.

Now, we introduce our results. The first one is as follows:
\begin{thm}\label{thm: main theorem Tp}
For a prime divisor $p$ of $N$, the Hecke operator $T_p$ acts on $\Phi_q(Nq)$ by $p$.
\end{thm}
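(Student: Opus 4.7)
The plan is to use the description, due to Deligne--Rapoport and Grothendieck, of $\Phi_q(Nq)$ in terms of the supersingular locus of $X_0(N)$ in characteristic $q$. Let $S$ denote the set of $\ov{\F_q}$-isomorphism classes of pairs $s=(E,C)$ with $E$ supersingular and $C$ a cyclic subgroup of $E$ of order $N$ (the $q$-part of a $\Gamma_0(Nq)$-structure being automatic at supersingular points), and for $s\in S$ set $w_s:=\#\Aut(s)/\{\pm 1\}$. The special fiber of the minimal regular model of $X_0(Nq)$ at $q$ has dual graph with two vertices joined by $|S|$ edges of length $w_s$ (the extra length arising from the exceptional components of a local resolution at points with extra automorphisms), so by Grothendieck's monodromy theorem
\[
\Phi_q(Nq) \;\cong\; \frac{\Z^S}{\Z\cdot\mathbf{1}_S \,+\, \langle w_s[s]-w_{s'}[s']:s,s'\in S\rangle},
\]
with the Hecke action induced by the moduli-theoretic correspondence on $\Z[S]$. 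Proving the theorem reduces to showing that $T_p[s]-p\cdot[s]$ lies in the displayed denominator for every $s\in S$.

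Since $p\mid N$, $T_p$ coincides with the Atkin operator $U_p$. Assuming $p\|N$ for concreteness (the case $p^a\|N$ is similar, with $C_p$ replaced by the unique order-$p$ subgroup of the $p$-part of $C$), for $s=(E,C_p\oplus C_{N/p})$ we have
\[
T_p\cdot [s] \;=\; \sum_{D\subset E[p],\, D\neq C_p}\big[\big(E/D,\,\mathrm{image\ of\ }(C_p\oplus C_{N/p})\big)\big],
\]
a sum of $p$ terms rather than the $p+1$ summed by $T_\ell$ for $\ell\nmid Nq$. The omission of $D=C_p$ (for which $E/D$ would not inherit a valid $\Gamma_0(N)$-structure) is the source of the expected eigenvalue $p$. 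In the ``generic'' situation $w_s=1$ and all $w_{s'_D}=1$, the relation $w_s[s]-w_{s'}[s']=0$ forces $[s']=[s'']$ in $\Phi_q(Nq)$ for any two weight-one classes, so the sum collapses immediately to $p\cdot[s]$.

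The substantive work is at supersingular points with extra automorphisms, which appear only when $q\equiv 2\pmod 3$ (at $j(E)=0$, with $w_s\in\{1,3\}$) or when $q\equiv 3\pmod 4$ (at $j(E)=1728$, with $w_s\in\{1,2\}$). At such an $s$ the group $\Aut(E)/\{\pm1\}$ acts nontrivially on the $p+1$ order-$p$ subgroups of $E$, and its interaction with the exclusion of $D=C_p$ causes the $p$ summands above to cluster into $\Aut$-orbits whose stabilizer indices govern the multiplicities of the classes $[s'_D]$ in $\Z[S]$. I would perform a case analysis organized by the residues of $q$ and $p$ modulo $12$ and by the compatibility of $C_p$ with $\Aut(E)$, and in each subcase verify that $T_p[s]-p\cdot[s]$ lies in $\Z\cdot\mathbf{1}_S + \langle w_{s'}[s']-w_{s''}[s'']\rangle$.

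The main obstacle is precisely this case analysis: when the weights $w_s,w_{s'_D}\in\{1,2,3\}$ differ, the balancing relations in $\Phi_q(Nq)$ must be invoked with care, and the mismatch between the $p+1$ subgroups permuted by $\Aut(E)$ and the $p$ subgroups actually appearing in $T_p$ interacts with the orbit structure in a way that must be tracked explicitly. This is the content referred to in the abstract as ``a careful study of supersingular points with extra automorphisms''; all other steps reduce to standard bookkeeping with the Deligne--Rapoport model and the moduli-theoretic correspondence for $U_p$.
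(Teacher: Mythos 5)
Your setup --- the Deligne--Rapoport/Grothendieck description of $\Phi_q(Nq)$ as $\Z^{\S(N)}$ modulo the all-ones vector and the relations $e(s)[s]=e(t)[t]$, with the Hecke action coming from the moduli correspondence --- is the same as the paper's, and your treatment of the generic case (all weights equal to $1$) is correct. But the proposal has a genuine gap: the entire content of the theorem is concentrated at the supersingular points with extra automorphisms, and there you only announce ``a case analysis organized by the residues of $q$ and $p$ modulo $12$'' without carrying it out. That analysis is not routine bookkeeping: the multiplicities with which the classes $[s'_D]$ occur in $T_p[s]$ are governed by the $\Aut(\cE)$-orbit structure on the cyclic subgroups lifting $C$; the correct formula on the component group is the weighted one, $T_p(\Psi_s)=\tfrac{1}{e(s)}\sum e(s_i)\Psi_{s_i}$, not an unweighted sum (and one must also keep track of the duality between $T_p$ and $\xi_p$ when passing from $\cX$ to $\Hom(\cX^t,\Z)$); and the combinatorics depends not only on $p,q\bmod 12$ but on the power of $p$ dividing $N$, where statements like $\cC_{p^{n+1}}[p^n]=\cC_{p^n}$ are needed. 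As written, the hard part of the proof is deferred rather than done.

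The paper avoids this case analysis entirely with one structural observation: the two degeneracy maps $\a_p,\b_p\colon X_0(Npq)\rightrightarrows X_0(Nq)$ induce the \emph{same} map $\a_{p,*}=\b_{p,*}$ on component groups. For $s$ without extra automorphisms this is immediate from the relation $e(t)\Psi_t=e(w)\Psi_w$ valid for \emph{any} two supersingular points; for $s$ with extra automorphisms one proves the stronger moduli-theoretic fact $\a_p(s)=\b_p(s)$ (Corollary \ref{cor: bijection alpha and alpha=beta}, using the $\Z[\s]$-module structure of $\cE[p^n]$; this generalizes Edixhoven's Lemme 2). Granting that lemma, $T_p=\b_{p,*}\circ\a_p^*=\a_{p,*}\circ\a_p^*=\deg(\a_p)\cdot\mathrm{id}=p$ with no further computation. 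If you wish to keep your direct approach you must actually execute the orbit and multiplicity analysis at the $j=0$ and $j=1728$ points; otherwise I would redirect the effort toward proving the coincidence of the two degeneracy maps, which is where the ``careful study of supersingular points with extra automorphisms'' actually lives.
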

The key idea of the proof is that the two degeneracy maps coincide on the component group (cf. \cite{R88},  \cite[Lemme 2 of \ts 4.2]{Ed91}).

Now, the missing one is the action of the Hecke operator $T_q$ on $\Phi_q(Nq)$. Note that $T_q$ acts on $\Phi_q(Nq)$ by an involution because the action of the Hecke algebra on $\Phi_q(Nq)$ is ``$q$-new.'' To describe its action more precisely, we need notation: 
For $N=\prod_{p \mid N} p^{n_p}$ being the prime factorization of $N$ (i.e., $n_p >0$), let $\nu:=\#\{ p : p \neq 2, 3 \}$ and let
\begin{align*}
 u &:=
  \begin{cases}
   0        & \text{if } q \equiv 1 \pmod 4 ~~\text{ or } ~~4 \mid N ~~\text{ or } ~~\exists ~p \equiv -1 \pmod 4 \\
   1        & \text{ otherwise},
  \end{cases}
  \\
 v &:=
  \begin{cases}
   0 & \text{if } q \equiv 1 \pmod 3 ~~\text{ or } ~~9 \mid N ~~\text{ or } ~~\exists ~p \equiv -1 \pmod 3\\
   1       & \text{otherwise}.
  \end{cases}
\end{align*}

Suppose that $(u, v)=(0, 0)$ or $\nu=0$. Then $\Phi_q(Nq)=\Phi$ and $T_q$ acts on $\Phi$ by $1$,
where $\Phi$ is the cyclic subgroup generated by the image of the cuspidal divisor $(0)-(\infty)$ (Proposition \ref{prop: case 1}).
If $\nu \geq 1$, $\Phi_q(Nq)$ becomes isomorphic to 
\[
\Phi' \oplus \bfA \oplus \bfB,
\]
where $\bfA \simeq (\zmod 2)^{\oplus u(2^\nu-2)}$, $\bfB \simeq (\zmod 3)^{\oplus v(2^\nu-1)}$ and $\Phi'$ is a cyclic group containing $\Phi$ and $\Phi'/\Phi \simeq (\zmod {2^u})$.\footnote{The structure of $\Phi_q(Nq)$ is already known by Mazur and Rapoport \cite{MR77} when $N$ is square-free and prime to $6$, and by Edixhoven \cite[\ts 4.4.1]{Ed91} in general.}
 
\begin{thm}\label{thm: main theorem Tq}
Assume that $(u, v)\neq (0, 0)$ and $\nu \geq 1$.
\begin{enumerate}
\item
Suppose that $v=1$.
Then there are distinct subgroups $B_i \simeq \zmod 3$ of $\bfB$ so that $\bfB = \oplus B_i$. For any $1\leq i \leq (2^\nu-1)$, $T_q$ acts on $B_i$ by $(-1)^i$. 

\item
Suppose that $u=1$.
Then there are distinct subgroups $A_i \simeq \zmod 2$ of $\bfA$ so that  $\bfA = \oplus A_i$. 
For any $1 \leq k \leq (2^{\nu-1}-2)$, $T_q$ acts on $A_{2k-1}\oplus A_{2k}$ by the matrix $\mat 1 0 1 1$.\footnote{This remind us the result by Mazur \cite{M77}: when $N$ is a prime number, the kernel of the Eisenstein prime of $J_0(N)$ containing a prime number $\ell$ is completely reducible when $\ell$ is odd, and is indecomposable when $\ell=2$.} In other words, if $A_{2k-1}=\<\mathbf{u}_{2k-1}\>$ and $A_{2k}=\<\mathbf{u}_{2k}\>$, then
\begin{equation*}
T_q(\mathbf{u}_{2k-1})=\mathbf{u}_{2k-1}+\mathbf{u}_{2k} \qa T_q(\mathbf{u}_{2k})=\mathbf{u}_{2k}.
\end{equation*} 
\end{enumerate}
\end{thm}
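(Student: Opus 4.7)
The plan is to identify $\Phi_q(Nq)$ with the Jacobian of the dual graph $\Gamma$ of the minimal regular model of $X_0(Nq)$ at $q$ and translate $T_q$ into a combinatorial action via the Atkin--Lehner involution $w_q$. Since $J_0(N)$ has good reduction at $q$, the two degeneracy maps $J_0(N)\rightrightarrows J_0(Nq)$ induce zero on component groups, so $\Phi_q(Nq)$ is $q$-new and satisfies $T_q=-w_q$. By Deligne--Rapoport (see \cite[\ts 4.4.1]{Ed91}), the graph $\Gamma$ has two main vertices $v_0,v_\infty$ (for the two copies of $X_0(N)_{\F_q}$) joined by one path of length $n_s:=\#\Aut(E,C)/\{\pm 1\}\in\{1,2,3\}$ for each supersingular point $s=(E,C)$ on $X_0(N)_{\F_q}$. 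The involution $w_q$ swaps $v_0\leftrightarrow v_\infty$ and sends the path of $s$ to the path of $w_q(s)=(E^{(q)},F(C))$ with orientation reversed, where $F$ is the $q$-Frobenius.

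Next I would pin down how $w_q$ permutes the $n_s\in\{2,3\}$ supersingular points. Under $v=1$, the $n_s=3$ points are the pairs $(E_0,C)$ with $j(E_0)=0$ and $\omega\in\Aut(E_0,C)$ for $\omega$ a primitive cube root of unity in $\End(E_0)$. Every prime $p\mid N$ with $p\ne 2,3$ then satisfies $p\equiv 1\pmod 3$, so $\Z[\omega]$ splits modulo $p^{n_p}$ and the $\omega$-stable cyclic subgroups of $E_0[N]$ of order $N$ are parametrized by $\{+,-\}^\nu$ via the choice of $\omega$-eigenspace at each such prime (the choice at $3$, if $3\|N$, being forced). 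The identity $F\omega=\omega^{-1}F$, valid since $q\equiv-1\pmod 3$, shows that Frobenius interchanges the two $\omega$-eigenspaces in $E_0[p]$ for every $p\equiv 1\pmod 3$, so composition with the isomorphism $E_0^{(q)}\cong E_0$ yields that $w_q$ acts on $\{+,-\}^\nu$ by the antipodal map $\epsilon\mapsto-\epsilon$. The analogous argument for $n_s=2$ (case $u=1$), using $i\in\Aut(E_{1728})$ and $q\equiv-1\pmod 4$, gives the same antipodal action.

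For the $3$-primary part $\bfB$, set $\delta_\epsilon:=[v_{\epsilon,1}]-[v_{\epsilon,2}]\in\Div^0(\Gamma)$ using the two intermediate vertices of the length-$3$ $\epsilon$-path. A direct Laplacian computation gives $3\delta_\epsilon\equiv[v_0]-[v_\infty]\pmod{\text{Princ}}$, so each $\delta_\epsilon-\delta_{\epsilon'}\in\Phi_q(Nq)[3]$, and a dimension count matching $|\bfB|=3^{2^\nu-1}$ identifies $\bfB$ with $\F_3[\{+,-\}^\nu]/\F_3\cdot 1$ via $\delta_\epsilon\mapsto\chi_{\{\epsilon\}}$. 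Since reversing a length-$3$ chain swaps the two intermediate vertices, $w_q(\delta_\epsilon)=-\delta_{-\epsilon}$ and hence $T_q(\delta_\epsilon)=\delta_{-\epsilon}$. Fourier-decomposing via the characters $\chi_S$ of $(\Z/2)^\nu$ indexed by $S\subseteq\{1,\ldots,\nu\}$, the antipodal action has eigenvalue $(-1)^{|S|}$, yielding $2^{\nu-1}$ lines on which $T_q$ acts by $-1$ (odd $|S|$) and $2^{\nu-1}-1$ non-trivial lines on which $T_q$ acts by $+1$ (even $|S|\ne 0$). Labeling the lines $B_i$ so that odd indices $i$ correspond to odd $|S|$ gives $T_q|_{B_i}=(-1)^i$.

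For $\bfA$ (case $u=1$), the analogous divisors $\delta_\epsilon:=[v_{\epsilon,1}]$ on the single middle vertex of each length-$2$ path satisfy $L[v_{\epsilon,1}]=2\delta_\epsilon-[v_0]-[v_\infty]$, so $\delta_\epsilon-\delta_{\epsilon'}\in\Phi_q(Nq)[2]$. The crucial difference from the $\bfB$ case is that reversing a length-$2$ chain fixes its single middle vertex, so $w_q(\delta_\epsilon)=\delta_{-\epsilon}$ without any extra minus sign and $T_q(\delta_\epsilon)=-\delta_{-\epsilon}$. Modulo $2$ the would-be $\pm 1$ eigenspaces of the antipodal $w_q$ collapse together (since $-1\equiv 1\pmod 2$), but the integer-level action retains a nontrivial extension class valued in $\Phi'/\Phi\cong\Z/2$; this extension is precisely what produces the Jordan blocks $\mat 1 0 1 1$ on pairs $A_{2k-1}\oplus A_{2k}$. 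The main obstacle is to make this integer-level bookkeeping rigorous: one must lift each $2$-torsion class in $\bfA$ to a full element of $\Phi_q(Nq)$, compute the $w_q$-action on the lift, and identify the resulting $\Z/2$-valued obstruction with the generator of $\Phi'/\Phi$. By contrast, the $\bfB$ case is cleanly semisimple modulo $3$ and requires no such integral tracking.
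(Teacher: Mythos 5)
Your route---identifying $\Phi_q(Nq)$ with the Jacobian of the dual graph of the special fiber, using $T_q=-w_q$ on the ($q$-new) component group, and Fourier-analysing the antipodal action of Frobenius on $\S_4\simeq\S_6\simeq\{\pm 1\}^{\nu}$---is genuinely different from the paper's, which works with the presentation $\Hom(\Z^{\S(N)},\Z)/R$ coming from the monodromy pairing and uses Ribet's result that $T_q$ acts on $\S(N)$ by $\Frob$. For part (1) your argument is essentially complete: the antipodal action is exactly Proposition \ref{prop: frobenius on S4 S6} ($\Frob(\cC_{p^n})=\cD_{p^n}$ at each $p\equiv 1\pmod 3$), the action on $\bfB$ is semisimple mod $3$, and your multiplicity count ($2^{\nu-1}$ lines with eigenvalue $-1$ from odd $|S|$, $2^{\nu-1}-1$ with $+1$ from even nonempty $S$) matches the paper's; any labelling putting odd indices on the $-1$-lines then proves the statement. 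The paper instead writes down explicit eigenvectors $\mathbf{v}_{2k-1}=\Psi_{t_{2k-1}}-\Psi_{t_{2k}}$ and $\mathbf{v}_{2k}=\Psi_{t_{2k-1}}+\Psi_{t_{2k}}-\Psi_t-\Psi_{t'}$; your character basis is an equally valid choice.

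Part (2), however, has a genuine gap that you yourself flag as ``the main obstacle'': you never produce a basis of $\bfA$ on which $T_q$ is block-unipotent, and the repair you propose is aimed at the wrong place. The interaction with $\Phi'/\Phi$ is real, but in the paper it enters only for $A_0$ and for the last pair $A_{2^{\nu}-3},A_{2^{\nu}-2}$ (where $T_q(\mathbf{u}_{2^{\nu}-3})$ picks up the term $2n\Psi_w$)---and precisely those are excluded from the range $1\le k\le 2^{\nu-1}-2$ of the statement. For the pairs the theorem does cover, no integral lifting or $\Phi'/\Phi$ bookkeeping is needed: take $\mathbf{u}_{2k-1}=\Psi_{w_{2k-1}}-\Psi_w$ and $\mathbf{u}_{2k}=\Psi_{w_{2k-1}}+\Psi_{w_{2k}}-\Psi_w-\Psi_{w'}$ and compute, using $T_q\Psi_s=\Psi_{\Frob(s)}$ and the fact that these classes are $2$-torsion,
\[
T_q(\mathbf{u}_{2k-1})=\Psi_{w_{2k}}-\Psi_{w'}=\mathbf{u}_{2k}-\mathbf{u}_{2k-1}=\mathbf{u}_{2k-1}+\mathbf{u}_{2k},
\qquad
T_q(\mathbf{u}_{2k})=\mathbf{u}_{2k}.
\]
What actually requires care---and what your sketch also omits---is verifying that these elements generate independent copies of $\zmod 2$ giving a direct-sum decomposition $A_0\oplus\bfA$ of $\Phi_q(Nq)$; Proposition \ref{prop: case 3} does this by matching against the presentation $A_0\bigoplus_i\<\Psi_{w_i}-\Psi_w\>$. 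Note that $\sum_{i=1}^{2^{\nu}}(\Psi_{w_i}-\Psi_w)=2n\Psi_w\neq 0$ in $A_0$, so the $2$-torsion subgroup generated by the $\Psi_{w_i}-\Psi_w$ is not disjoint from $A_0$; this is exactly why the naive mod-$2$ ``eigenspace'' picture fails and why your plan of lifting $2$-torsion classes and tracking an obstruction would still have to be organised around a basis like $\{\mathbf{u}_i\}$. Until such a basis is written down and these verifications made, part (2) is not proved.
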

For a complete description of the action of $T_q$ on each subgroups, see Section \ref{sec: Tq action}.

\section{Supersingular points of $X_0(N)$} \label{sec: supersingular}
From now on, we always assume that \textsf{$q\geq 5$ is a prime number} and \textsf{$N$ is a positive integer which is prime to $q$}.
Let \textsf{$p$ denote a prime divisor of $N$}. Let $\bfF$ be an algebraically closed field of characteristic $q$. 

Let $\S(N)$ denote the set of supersingular points of $X_0(N)(\bfF)$.
Since we assume that $q\geq 5$, the group of automorphisms of supersingular points is cyclic of order $2$, $4$ or $6$. Let
\[
\S_n(N) := \{ s \in \S(N) : \# \Aut(s)=n \} \qa s_n(N) := \#\S_n(N).
\]
Note that $s_4(N)=u\cdot 2^\nu$ and $s_6(N)=v \cdot  2^\nu$ (cf. \cite[\ts 4.2, Lemme 1]{Ed91}), 
where $u, v$ and $\nu$ are as in Section \ref{sec: introduction}. 
Moreover $s_2(N)$ can be computed using Eichler's mass formula \cite[Theorem 12.4.5, Corollary 12.4.6]{KM85}:
\begin{equation} \label{eqn: mass formula}
\frac{s_2(N)}{2} + \frac{s_4(N)}{4}+\frac{s_6(N)}{6} = \frac{(q-1)Q}{24},
\end{equation}
where $Q:=N\prod_{p \mid N} (1+p^{-1})$ is the degree of the degeneracy map $X_0(N) \to X_0(1)$.

In the remainder of this section, we study $\S_4(N)$ and $\S_6(N)$ in detail. (See also \cite[\ts 2]{R88}, \cite[\ts 4]{R97} or \cite[\ts 4.2]{Ed91}.) In the section below, \textsf{we always assume that $\nu\geq 1$}, i.e., there is a prime divisor $p\geq 5$ of $N$. (If $\nu=0$ then $s_{2e}(N)\leq 1$ for $e=2$ or $3$, and the description is very simple.)

 Let $\cE$ be a supersingular elliptic curve with $\Aut(\cE)=\< \s \>$, and let $C$ be a cyclic subgroup of $\cE$ of order $N$. 
 Assume that $q \equiv -1 \pmod 4$ (resp. $q \equiv -1 \pmod 3$) if $\s=\s_4$ (resp. $\s=\s_6$), where $\s_k$ is a primitive $k$-th root of unity. 
\begin{prop}\label{prop:supersingular s6}
Let $N=p^n$ for some $n\geq 1$ with $p\geq 5$. Suppose $\Aut(\cE, C)=\< \s \>$. Then, there exists another cyclic subgroup $D$ of order $N$ such that $\cE[N]\simeq C \oplus D$. Moreover, $\Aut(\cE, D)=\< \s \>$ and $(\cE, C)$ is not isomorphic to $(\cE, D)$.
\end{prop}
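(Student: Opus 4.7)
The plan is to diagonalize the action of $\s$ on the group $\cE[N]$ and take $D$ to be the non-$C$ eigenspace.

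First, I would examine $\s\vert_C$. Since $\s$ preserves $C \simeq \Z/p^n\Z$ by hypothesis and has order $4$ or $6$, the restriction $\s\vert_C$ is multiplication by some unit $\z \in (\Z/p^n\Z)^\times$ of that same order, and hence $\z$ is a root of $f(x) = x^2 + 1$ or $f(x) = x^2 - x + 1$ according as $\s = \s_4$ or $\s = \s_6$. Because $p \geq 5$, the existence of such $\z$ forces $p \equiv 1 \pmod 4$ or $p \equiv 1 \pmod 3$, respectively. The conjugate root $\bar\z$ (equal to $-\z$ or $1-\z$) is distinct from $\z$, and $\z - \bar\z$ equals $2\z$ or $2\z-1$ --- a unit of $\Z/p^n\Z$ (the latter case uses $(2\z-1)^2 = -3$, which is a unit for $p \geq 5$).

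Next, from these two unit-separated roots I would form the CRT-idempotents
\[
e_1 := (\z - \bar\z)^{-1}(\s - \bar\z), \qquad e_2 := 1 - e_1
\]
inside $\End(\cE[N])$. They yield an $\s$-equivariant splitting $\cE[N] = M_1 \oplus M_2$ with $M_i := e_i \cE[N]$, where $\s$ acts by $\z$ on $M_1$ and by $\bar\z$ on $M_2$. I would then check that each $M_i$ is cyclic of order exactly $p^n$: reducing modulo $p$, $\cE[p] \simeq \F_p^2$ with $\s$ diagonalizable through the two distinct eigenvalues, so each $M_i \otimes_{\Z/p^n\Z} \F_p$ is one-dimensional. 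Nakayama's lemma then forces $M_i$ to be cyclic over $\Z/p^n\Z$, and $\abs{M_1} \cdot \abs{M_2} = p^{2n}$ together with $\abs{M_i} \leq p^n$ pins down $M_1 \simeq M_2 \simeq \Z/p^n\Z$.

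Since $\s\vert_C = \z$, we have $C \subseteq M_1$, and by cardinality $C = M_1$. Setting $D := M_2$ gives a cyclic subgroup of order $N$ with $\cE[N] = C \oplus D$, and $\s$-stability of $D$ yields $\<\s\> \subseteq \Aut(\cE, D) \subseteq \Aut(\cE) = \<\s\>$, so $\Aut(\cE, D) = \<\s\>$. For the non-isomorphism, any $(\cE, C) \xrightarrow{\sim} (\cE, D)$ must come from some $\phi \in \Aut(\cE) = \<\s\>$ with $\phi(C) = D$; but every power of $\s$ stabilises $C$, while $C \neq D$ (they are distinct eigenspaces of $\s$), a contradiction. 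The main obstacle I anticipate is the cyclicity step --- ruling out degenerate splittings such as $M_1 \simeq \Z/p^{2n}\Z$ with $M_2 = 0$ --- which the Nakayama plus cardinality argument handles cleanly once Step~1 has supplied mod-$p$ diagonalizability.
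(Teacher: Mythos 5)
Your proof is correct, and at bottom it produces the same decomposition as the paper's: your eigenspaces $M_1=\ker(\s-\z)$ and $M_2=\ker(\s-\bar\z)$ are exactly the subgroups $\cE[\g^n]$ and $\cE[\d^n]$ that the paper obtains from the splitting $p=\g\d$ in the principal ideal domain $R=\Z[\s]$ (your idempotents $e_1,e_2$ are precisely the idempotents of $R/NR\simeq R/\g^nR\oplus R/\d^nR$). The difference is in how the decomposition is justified. The paper invokes the fact that $\cE[N]$ is a free rank-one module over $R/NR$ and then reads everything off from the ideal theory of $R$; you instead work directly in $\End(\cE[N])$ and establish cyclicity of the two summands via reduction mod $p$ plus Nakayama, which is more elementary and sidesteps the freeness claim. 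The one step you assert without justification is that $\s$ acts on $\cE[p]$ with \emph{both} eigenvalues $\z,\bar\z$ occurring (rather than as the scalar $\z$): this is what rules out the degenerate splitting $M_2=0$, and it follows because the characteristic polynomial of $\s$ on $\cE[p]$ is $x^2-(\tr\s)x+\deg\s=f(x)\bmod p$, which has distinct roots mod $p$ since $\z-\bar\z$ is a unit. With that one line added, your argument is complete, and arguably more self-contained than the paper's.
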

\begin{proof}
Here, we closely follow the argument in the proof of Proposition 1 in \cite[\ts 2]{R88}.

Let $R$ be the subring $\Z[\s]$ of $\End(\cE, C)$. Since $\Aut(\cE, C)=\< \s \>$, $p\equiv 1 \pmod 4$ (resp. $p \equiv 1 \pmod 3$) if $\s=\s_4$ (resp. $\s=\s_6$). Therefore $p$ splits completely in $R$. 
Note that $R=\Z[\s]$ is a principal ideal domain and therefore
\[
R/{p R} \simeq R/{\g R} \oplus R/{\d R} \simeq {\d R}/{p R} \oplus {\g R}/{p R}
\]
with $p=\g \d$. Moreover,
\[
R/NR=R/{p^n R} \simeq R/{\g^n R} \oplus R/{\d^n R} \simeq {\d^n R}/{N R} \oplus {\g^n R}/{N R}.
\]

Note that $\cE[N]$ is a free of rank $1$ module over $R/NR$ by the action of $R$ on $\cE$.
We may identify $C$ with the quotient $I/NR$ for some ideal $I$ of $R$ containing $N$ if we fix an $R$-isomorphism between $\cE[N]$ and $R/NR$. Thus, $I=\d^n R$ or $\g^n R$. 
Suppose that $I=\d^n R$. Then, by the fixed isomorphism $C=\cE[\g^n]$. Let $D:=\cE[\d^n]$ so that its corresponding ideal is $\g^n R$. Then, $\cE[N] \simeq C \oplus D$. Moreover since $\g^n R$ is also an ideal of $R$, $D$ is also stable under the action of $\s$. In other words,
$\Aut(\cE, D)=\< \s \>$.

Since $\Aut(\cE)=\< \s \>$ and $\s(C)=C$, $(\cE, C)$ cannot be isomorphic to $(\cE, D)$.
\end{proof}

From now on, we use the same notation as in the proof of Proposition \ref{prop:supersingular s6}.
\begin{defn}
By the above formulas for every $n\geq 1$ and $p \equiv 1 \pmod 4$ (resp. $p\equiv 1 \pmod 3$), there are precisely two cyclic subgroups $C$, $D$ of $\cE$ of order $p^n$ such that $\Aut(\cE, C) =\Aut (\cE, D)= \< \s \>$ (and $\cE[p^n] \simeq C\oplus D$)  if $\s=\s_4$ (resp. if $\s=\s_6$). Thus, for each $n\geq 1$ we define $\cC_{p^n}$ and $\cD_{p^n}$ by 
\[
\cC_{p^n}:=\cE[\g^n] \qa \cD_{p^n}:=\cE[\d^n].
\]
\end{defn}

\begin{prop}\label{prop: degeneracy alpha}
For each $n\geq 1$, $\cC_{p^{n+1}}[p^{n}]=\cC_{p^{n}}$ and $\cD_{p^{n+1}}[p^{n}]=\cD_{p^{n}}$.
\end{prop}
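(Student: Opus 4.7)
The plan is to exploit the $R$-module structure on $\cE[p^{n+1}]$ introduced in the proof of Proposition \ref{prop:supersingular s6}, where $R=\Z[\s]$ and $p = \g\d$ is the splitting of $p$ in $R$. Once this is done, both equalities reduce to elementary ideal arithmetic in a principal ideal domain.

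First, I would observe that under the fixed $R$-module isomorphism $\cE[p^{n+1}] \simeq R/p^{n+1}R$, the subgroups $\cC_{p^{n+1}} = \cE[\g^{n+1}]$ and $\cE[p^n]$ correspond to the $R$-submodules annihilated by $\g^{n+1}$ and by $p^n = \g^n\d^n$ respectively. Hence
\[
\cC_{p^{n+1}}[p^n] \;=\; \cE[\g^{n+1}] \cap \cE[p^n] \;=\; \cE\bigl[\,(\g^{n+1}) + (\g^n \d^n)\,\bigr].
\]
The key step is then to compute the sum of ideals on the right. Since $R$ is a PID and the prime elements $\g$ and $\d$ are coprime (their product is $p$, and they generate distinct maximal ideals), we have $(\g^{n+1}) + (\g^n\d^n) = (\g^n)$. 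This gives $\cC_{p^{n+1}}[p^n] = \cE[\g^n] = \cC_{p^n}$, and the argument for $\cD_{p^{n+1}}[p^n]=\cD_{p^n}$ is identical after swapping the roles of $\g$ and $\d$.

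I do not expect any genuine obstacle here: the whole point of Proposition \ref{prop:supersingular s6} was to install the $R$-module structure, and once this structure is in hand the statement is a purely formal consequence of $R$ being a PID in which $\g, \d$ are coprime. The only mild care needed is to make sure that the $R$-isomorphism $\cE[N]\simeq R/NR$ fixed at level $p^{n+1}$ restricts compatibly to the one at level $p^n$ (so that the two definitions of $\cC_{p^n}$ and $\cD_{p^n}$ agree); this follows from the natural surjection $R/p^{n+1}R \twoheadrightarrow R/p^nR$ induced by the inclusion $\cE[p^n] \hookrightarrow \cE[p^{n+1}]$.
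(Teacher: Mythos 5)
Your argument is correct and rests on the same mechanism as the paper's proof: the $R$-module structure installed in Proposition \ref{prop:supersingular s6} together with ideal arithmetic for the splitting $p=\g\d$ in the PID $R=\Z[\s]$. The paper carries out the computation inside the quotient $R/p^{n+1}R$ (identifying the $p^n$-torsion of $I/p^{n+1}R$ with $\g I/p^{n+1}R$ and rescaling by $1/p$), while you use $\cC_{p^{n+1}}[p^n]=\cE[\g^{n+1}]\cap\cE[p^n]=\cE[(\g^{n+1})+(\g^n\d^n)]=\cE[\g^n]$; this is the same calculation in a slightly cleaner, more intrinsic packaging, and your closing concern about compatibility of the fixed isomorphisms is moot because $\cC_{p^n}=\cE[\g^n]$ is defined directly as the kernel of the endomorphism $\g^n$.
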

\begin{proof}
By the fixed $R$-isomorphism $\iota$ between $\cE[p^{n+1}]$ and $R/{p^{n+1} R}$, we identify $\cC_{p^{n+1}}$ with $I/{p^{n+1} R}$, where $I=\d^{n+1} R$. As $I$ is an ideal of $R$, $\g I = p (\d^n R) \subset I$ and
$I/{\g I} \simeq R/{\g R} \simeq \zmod p$. Therefore 
\[
\xymatrix{
\cC_{p^{n+1}}[p^n] \ar[r]^-{\iota} & \left( I/{p^{n+1} R} \right) [p^n] = \g I/{p^{n+1} R} \ar[r]_-{\times 1/p}^-{\sim} & (\d^n R)/{p^n R},
}
\]
which corresponds to $\cC_{p^n}$. Similarly, we prove that $\cD_{p^{n+1}}[p^n]=\cD_{p^n}$, and the proposition follows.
\end{proof}

Let $N=Mp^n$ with $(6M, p)=1$ and $n \geq 1$. 
Let $L$ be a cyclic subgroup of $\cE$ of order $M$.
\begin{prop} \label{prop: degeneracy beta}
Suppose that $\Aut(\cE, \cC_{p^{n+1}}, L)=\< \s \>$. Then, there is an isomorphism between \\
$(\cE/{\cC_p},  \cC_{p^{n+1}}/{\cC_p}, (L\oplus \cC_p)/{\cC_p}))$ and $(\cE, \cC_{p^{n}}, L)$.
\end{prop}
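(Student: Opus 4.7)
The plan is to build the isomorphism from the isogeny $\g : \cE \to \cE$, with $p = \g\d$ the factorization in $R = \Z[\s]$ used to define the $\cC_{p^k}$. The key observation is that $\ker(\g) = \cE[\g] = \cC_p$ by the very definition of $\cC_p$, so $\g$ is a degree-$p$ isogeny factoring as $\cE \twoheadrightarrow \cE/\cC_p \xrightarrow{\bar\g} \cE$ with $\bar\g$ an isomorphism. This $\bar\g$ will be my candidate isomorphism of triples.

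The next step is to compute $\bar\g$ on the marked subgroups. For the $p$-part, $\bar\g(\cC_{p^{n+1}}/\cC_p) = \g(\cE[\g^{n+1}])$, which equals $\cE[\g^n] = \cC_{p^n}$ because $\g$ is surjective on $\cE$. For the prime-to-$p$ part, $\bar\g((L \oplus \cC_p)/\cC_p) = \g(L) + \g(\cC_p) = \g(L)$, since $\g$ kills $\cC_p$. Thus the whole proposition reduces to the equality $\g(L) = L$, and this is precisely where the hypothesis $\s(L) = L$ enters.

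To establish $\g(L) = L$, I would use that $R = \Z[\s]$ is the ring of integers in $\Q(\s_4)$ or $\Q(\s_6)$, in either case with $\Z$-basis $\{1, \s\}$. Writing $\g = a + b\s$ for some $a, b \in \Z$ and applying $\s(L) = L$ gives $\g(L) \subseteq L$. Since $\ker(\g) = \cC_p$ has order $p$ coprime to $\#L = M$, the restriction $\g|_L$ is injective, and finiteness of $L$ forces $\g(L) = L$. I expect the only real subtlety to be this last step: one has to use the full hypothesis that $\s$ preserves the triple $(\cE, \cC_{p^{n+1}}, L)$---not merely $(\cE, \cC_{p^{n+1}})$---and transfer this preservation of $L$ from $\s$ to $\g$ via the $\Z$-basis.
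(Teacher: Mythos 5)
Your proposal is correct and follows essentially the same route as the paper: both take the degree-$p$ isogeny $\g$ with kernel $\cE[\g]=\cC_p$, pass to the induced isomorphism $\bar\g:\cE/\cC_p\to\cE$, and check that it carries $\cC_{p^{n+1}}/\cC_p$ to $\cC_{p^n}$ and $(L\oplus\cC_p)/\cC_p$ to $L$. Your explicit justification of $\g(L)=L$ (writing $\g=a+b\s$ and using $\s(L)=L$ together with $L\cap\cE[p]=0$) just spells out what the paper leaves as the parenthetical remark ``because $L\cap\cE[p]=0$.''
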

\begin{proof}
We mostly follow the idea of the proof of Proposition 2 in \cite[\ts 2]{R88}.

The endomorphism $\g$ sends $\cE[\g^{n+1}]=\cC_{p^{n+1}}$ to $\cE[\g^n]=\cC_{p^n}$, and $L$ to itself (because $L \cap \cE[p]=0$).
Now we denote by $\ov{\g}$ the map $\cE/{\cC_{p}} \to \cE$ induced by $\g$. Note that $\ov{\g}$ is an isomorphism  because $\cC_{p}$ is $\cE[\g]$, the kernel of $\g$.
By the above consideration, this isomorphism $\ov{\g}$ sends $(\cC_{p^{n+1}}/{\cC_p}, (L\oplus \cC_{p})/{\cC_{p}})$ to $(\cC_{p}, L)$ because $\cC_{p^{n+1}}/{\cC_p}$ and $(L\oplus \cC_{p})/{\cC_{p}}$ are the images of $\cC_{p^{n+1}}$ and $L$ by the quotient map $\cE \to \cE/{\cC[p]}$, respectively. Therefore $\ov{\g}$ gives rise to the desired isomorphism between triples.
\end{proof}
\begin{cor}\label{cor: bijection alpha and alpha=beta}
The map $(\cE, C, L) \to (\cE, C[p^n], L)$ induces a bijection between $\S_{2e}(Np)$ and $\S_{2e}(N)$, where $\s=\s_{2e}$. Moreover if $(\cE, C, L) \in \S_{2e}(Np)$, we have
\begin{equation*}
(\cE, C[p^n], L) \simeq (\cE/{C[p]}, C/{C[p]}, (L\oplus C[p])/{C[p]}).
\end{equation*} 
\end{cor}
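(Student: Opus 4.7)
My plan is to combine Propositions~\ref{prop:supersingular s6}, \ref{prop: degeneracy alpha}, and \ref{prop: degeneracy beta} directly. The second assertion, the isomorphism $(\cE, C[p^n], L)\simeq(\cE/C[p], C/C[p], (L\oplus C[p])/C[p])$, is essentially Proposition~\ref{prop: degeneracy beta} applied after Proposition~\ref{prop:supersingular s6} identifies $C$ with $\cC_{p^{n+1}}$ (or, by symmetry, $\cD_{p^{n+1}}$) under a chosen $R$-model of $\cE[p^{n+1}]$.

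For the bijection, I would verify well-definedness, surjectivity, and injectivity separately. For \emph{well-definedness}, writing $C=\cC_{p^{n+1}}$, Proposition~\ref{prop: degeneracy alpha} gives $C[p^n]=\cC_{p^n}$, which is still $\s$-stable; since $q\geq 5$ restricts $\Aut(\cE)$ to a cyclic group of order at most $6$ and the presence of $\s$ of order $2e\in\{4,6\}$ forces $\Aut(\cE)=\<\s\>$, one gets $\Aut(\cE, C[p^n], L)=\<\s\>$, so the image lies in $\S_{2e}(N)$. For \emph{surjectivity}, given $(\cE', C', L')\in\S_{2e}(N)$, Proposition~\ref{prop:supersingular s6} identifies $C'$ with $\cC_{p^n}$ in an $R$-model of $\cE'[p^n]$, and then $\cC_{p^{n+1}}:=\cE'[\g^{n+1}]$ is an $\s$-stable cyclic subgroup of order $p^{n+1}$ with $p^n$-torsion $\cC_{p^n}=C'$ by Proposition~\ref{prop: degeneracy alpha}, providing the desired preimage.

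\emph{Injectivity} is the point requiring real care, and is the main obstacle. Suppose $\p:(\cE_1, C_1[p^n], L_1)\to(\cE_2, C_2[p^n], L_2)$ is an isomorphism of the projected triples. Then $\p$ conjugates $\Aut(\cE_1)=\<\s_1\>$ onto $\Aut(\cE_2)=\<\s_2\>$, so $\p\s_1\p^{-1}=\s_2^{\pm 1}$; consequently $\p(C_1)$ is an $\s_2$-stable cyclic subgroup of $\cE_2$ of order $p^{n+1}$ whose $p^n$-torsion equals $C_2[p^n]$. By Propositions~\ref{prop:supersingular s6} and~\ref{prop: degeneracy alpha}, there are exactly two such $\s_2$-stable cyclic subgroups in $\cE_2$ and they are distinguished by their $p^n$-torsions; hence $\p(C_1)=C_2$, upgrading $\p$ to an isomorphism at level $Np$. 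Everything else is a direct assembly of the previous three propositions.
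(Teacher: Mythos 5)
Your proof is correct and takes the approach the paper intends: the corollary is stated there without any written proof, as an immediate consequence of Propositions \ref{prop:supersingular s6}, \ref{prop: degeneracy alpha} and \ref{prop: degeneracy beta}. Your assembly of those three results, including the one step that genuinely needs an argument (injectivity, via the fact that $\cE_2$ has exactly two $\s_2$-stable cyclic subgroups of order $p^{n+1}$ and that these are distinguished by their $p^n$-torsions $\cC_{p^n}\neq\cD_{p^n}$), is exactly the verification the authors leave to the reader.
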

The corollary tells us that two degeneracy maps
$\a_p$ and $\b_p$ in Section \ref{sec: hecke action on ss sets}
coincide on $\S_{2e}(Np)$, which is a generalization of \cite[\ts 4.2, Lemme 2]{Ed91}.

\begin{prop}\label{prop: frobenius on S4 S6}
Suppose that $\Aut(\cE, \cC_{p^n}, L)=\< \s \>$. Then, $\Frob(\cE)=\cE$ and $\Frob(\cC_{p^n}) = \cD_{p^n}$, where $\Frob$ is the Frobenius morphism in characteristic $q$. Furthermore, $\Frob^2(\cE, \cC_{p^n}, L)=(\cE, \cC_{p^n}, L)$.
\end{prop}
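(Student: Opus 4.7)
The plan is to identify $\Frob$ with an endomorphism $F \in \End(\cE)$ and analyze it through the embedding $R = \Z[\s] \hookrightarrow \End(\cE)$ together with the quaternion structure of $B := \End(\cE) \otimes \Q$. Since $\s$ has order $4$ or $6$, $j(\cE) \in \{0, 1728\} \subset \F_q$, so $\cE^{(q)} \cong \cE$, yielding $\Frob(\cE) = \cE$ at the level of moduli. Fix an isomorphism $\iota \colon \cE^{(q)} \xrightarrow{\sim} \cE$ and set $F := \iota \circ \Frob \in \End(\cE)$. This endomorphism has degree $q$, and by Deuring's theorem (a supersingular elliptic curve defined over $\F_q$ with $q \geq 5$ satisfies $a_q = 0$), one obtains $F^2 = -q$.

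Next, I claim $F \notin K := R \otimes \Q$. Indeed, if $F$ lay in $K$, then $F^2 = -q$ would express $\pm q$ as a norm from $K$, contradicting the inertness of $q$ in $K$ --- a consequence of the standing hypothesis that $q \equiv -1 \pmod 4$ when $\s = \s_4$, and $q \equiv -1 \pmod 3$ when $\s = \s_6$. Because $K$ is its own centralizer in the quaternion algebra $B$, conjugation by $F$ must induce the nontrivial Galois automorphism of $K/\Q$, so $F \s F^{-1} = \s^{-1}$. Writing $p = \g \d$ with $\d = \bar\g$, this yields the key relation $F \g^n = \d^n F$ in $\End(\cE)$.

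For any $P \in \cC_{p^n} = \ker(\g^n)$, one then has $\d^n(FP) = F \g^n P = 0$, so $F(\cC_{p^n}) \subseteq \cD_{p^n}$; since $F$ is bijective on prime-to-$q$ torsion and both groups have order $p^n$, equality holds and $\Frob(\cC_{p^n}) = \cD_{p^n}$. This is independent of the choice of $\iota$, since a different $\iota$ alters $F$ by an element of $\Aut(\cE) = \langle \s \rangle \subset R$ which commutes with $\d$ and hence preserves $\cD_{p^n} = \ker(\d^n)$. Finally, $F^2 = [-q]$ acts as a unit on every finite subgroup of order coprime to $q$, so it preserves both $\cC_{p^n}$ and $L$; hence $\Frob^2(\cE, \cC_{p^n}, L) = (\cE, \cC_{p^n}, L)$. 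The main technical step is the anticommutation $F \s F^{-1} = \s^{-1}$, which ultimately rests on the inertness of $q$ in $K$; the remaining assertions then reduce to routine calculations inside the commutative ring $R$.
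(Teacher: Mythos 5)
Your argument is correct and runs on the same engine as the paper's proof: the anticommutation of Frobenius with $\s$ inside the quaternion algebra $\End(\cE)\otimes\Q$, giving $F\g^n=\d^n F$ and hence the interchange of $\cC_{p^n}=\ker(\g^n)$ with $\cD_{p^n}=\ker(\d^n)$. The execution differs at three points. First, where the paper simply asserts $\s\circ\Frob=\Frob\circ\bar\s$ ``since $\End(\cE)$ is a quaternion algebra,'' you explain \emph{why} the conjugation is nontrivial, via the inertness of $q$ in $K=\Q(\s)$ forcing $F\notin K$. One step is still implicit on your side: from $F\notin K$ and $K=C_B(K)$ alone it does not follow that conjugation by $F$ preserves $K$ --- a generic element of $B^\times\setminus K^\times$ need not normalize $K$. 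You need the prior observation that $F\s F^{-1}$ is again an automorphism of $\cE$ (e.g.\ because conjugation by an isogeny preserves the endomorphism ring and degrees), hence lies in $\Aut(\cE)=\<\s\>\subset K$; granting that, your ``nontrivial Galois automorphism'' conclusion is exactly right, and the gap is no worse than the paper's own level of detail at this point. Second, you deduce $\Frob(\cC_{p^n})=\cD_{p^n}$ directly from $F\g^n=\d^n F$ by a kernel computation plus counting, which is cleaner than the paper's route (show $\Frob(\cC_{p^n})$ is $\s$-stable, so equals $\cC_{p^n}$ or $\cD_{p^n}$, then eliminate the first option by tracking how $\g$ and $\d$ move the two subgroups). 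Third, for $\Frob^2$ the paper invokes the $\F_{q^2}$-rationality of supersingular points, whereas you use $F^2=[-q]$ (valid since a supersingular curve over the prime field $\F_q$ with $q\ge 5$ has $a_q=0$), which also visibly fixes $L$ and keeps the argument self-contained; your remark on independence of the choice of $\iota$ is a welcome addition the paper omits.
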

\begin{proof}
Since $\cE$ is isomorphic to the reduction of the elliptic curve with $j$-invariant $1728$ (resp. $0$) if $\s=\s_4$ (resp. $\s=\s_6$), the Frobenius morphism is an endomorphism of $\cE$ (cf. \cite[Chapter V, Examples 4.4 and 4.5]{Si86}). Moreover, the Frobenius morphism and $\s$ generates $\End(\cE)$, which is a quaternion algebra.
(Note that the degree of the Frobenius morphism is $q$.)  Since $\End(\cE)$ is a quaternion algebra, we have
\begin{equation*}
\s \circ \Frob = \Frob \circ \bar{\s} = \Frob \circ \s^{-1},
\end{equation*}
where $\bar{\s}$ denotes the complex conjugation in $R=\Z[\s]$. Analogously, we have
\begin{equation*}
\g \circ \Frob = \Frob \circ \bar{\g} = \Frob \circ \d.
\end{equation*}

Since $\s(\Frob(\cC_{p^n}))=\Frob(\s^{-1}(\cC_{p^n}))=\Frob(\cC_{p^n})$, $\Frob(\cC_{p^n})$ is also stable under the action of $\s$. Moreover $\cC_{p^n}$ does not intersect with the kernel of $\Frob$.
Thus, $\Frob(\cC_{p^n})$ is either $\cC_{p^n}$ or $\cD_{p^n}$.
As an endomorphism of $\cE$, $\g$ sends $\cC_{p^n}$ (resp. $\cD_{p^n}$) to $\cC_{p^{n-1}}$ (resp. $\cD_{p^n}$). Similarly, $\d$ maps $\cC_{p^n}$ (resp. $\cD_{p^n}$) to $\cC_{p^n}$ (resp. $\cD_{p^{n-1}}$). Therefore if $\Frob (\cC_{p^n})=\cC_{p^n}$, then
\begin{equation*}
\g \circ \Frob (\cC_{p^n})=\g (\cC_{p^n})=\cC_{p^{n-1}} \qa \Frob \circ \d (\cC_{p^n})=\Frob (\cC_{p^n})=\cC_{p^n},
\end{equation*}
which is a contradiction. Thus, we get $\Frob (\cC_{p^n})=\cD_{p^n}$.

Since every supersingular point can be defined over $\F_{q^2}$, the quadratic extension of $\F_q$, 
$\Frob^2$ acts trivially on $\S(N)$ (cf. \cite[Remark 3.5.b]{R90}), which proves the last claim.
\end{proof}
\begin{rem}
By taking $H=(\zmod N)^*$ in Lemma 1 of \cite{R97}, we can obtain a similar result if we show that the Atkin-Lehner style involution in \cite[\ts 4]{R97} is equal to the Frobenius morphism.
\end{rem}

\section{The action of $T_p$ on the component group}\label{sec: hecke action on ss sets}
Before discussing the action of the Hecke operators on the component group, we study it on the group of divisors supported on supersingular points, which we denote by $\Div(\S(N))$.

Let $N=Mp^n$ with $(M, p)=1$ and $n\geq 1$, and assume that $(N, q)=1$.
Let $\a_p, ~~\b_p : X_0(Npq) \rightrightarrows X_0(Nq)$ denote two degeneracy maps of degree $p$, defined by
\[
\a_p(E, C, L):=(E, C[p^n], L) \qa \b_p(E, C, L):=(E/{C[p]}, C/{C[p]}, (L+C[p])/{C[p]}),
\]
where $C$ (resp. $L$) denotes a cyclic subgroup of order $p^{n+1}$ (resp. $Mq$) in an elliptic curve $E$
(cf. \cite[\ts 13]{MR91}).
Let $T_p$ and $\xi_p$ be two Hecke correspondences defined by
\[
\xyv{1.5}
\xyh{0.5}
\xymatrix{
& X_0(Npq) \ar[dl]_-{\a_p}  \ar[dr]^-{\b_p}& \\
X_0(Nq) \ar@<.5ex>@{-->}[rr]^-{\xi_p} && X_0(Nq). \ar@<.5ex>@{-->}[ll]^-{T_p}
}
\]
By pullback, the Hecke correspondence $T_p$ (resp. $\xi_p$)  induces the Hecke operator $T_p:=\b_{p,*} \circ \a_p^*$ (resp. $\xi_p:=\a_{p,*} \circ \b_p^*$) on $J_0(Nq)$. 

The same description of the Hecke operator $T_p$ on $\Div(\S(N))$ as above works. In other words, 
we have two degeneracy maps\footnote{every elliptic curve isogenous to a supersingular one is also supersingular} $\a_p, \b_p : \S(Np) \rightrightarrows \S(N)$ of degree $p$, defined by
\[
\a_p(E, C, L):=(E, C[p^n], L) \qa \b_p(E, C, L):=(E/{C[p]}, C/{C[p]}, (L+C[p])/{C[p]}),
\]
where $C$ (resp. $L$) denotes a cyclic subgroup of order $p^{n+1}$ (resp. $M$) in a supersingular elliptic curve $E$ over $\bfF$. These maps induce the maps on their divisor groups:
\begin{equation*}
\xymatrix{
\Div(\S(N)) \ar@<.5ex>[r]^-{\a_p^*} \ar@<-.5ex>[r]_{\b_p^*} & \Div(\S(Np)) 
\ar@<.5ex>[r]^-{\a_{p, *}} \ar@<-.5ex>[r]_{\b_{p, *}}& \Div(\S(N))
}
\end{equation*}
and the Hecke operator $T_p$ (resp. $\xi_p$) can be defined by $\b_{p, *} \circ \a_p^*$ (resp. $\a_{p, *} \circ \b_p^*$). (For the details when $n=0$, see \cite[\ts 3]{R90}, \cite[p. 18--22]{Ra91}, \cite[\ts 4.1]{Ed91} or \cite[\ts 7]{Em02}. By the same method, we get the above description without further difficulties.)

Now, let $\Phi_q(Nq)$ denote the component group of the special fiber $\cJ$ of the N\'eron model of $J_0(Nq)$ at $q$. To compute the action of $T_p$ on it, we closely follow the method of Ribet (cf. \cite{R88}, \cite[\ts 2, 3]{R90}, \cite[\ts 1]{Ed91}).
Since $N$ is not divisible by $q$, the identity component $\cJ^0$ of $\cJ$ is a semi-abelian variety by Deligne-Rapoport \cite{DR73} and Raynaud \cite{Ra70}. Moreover, $\cJ^0$ is an extension of $J_0(N)_{\bfF} \times J_0(N)_{\bfF}$ by $\cT$, the torus of $\cJ^0$. Let $\cX$ be the character group of the torus $\cT$. By Grothendieck, there is a (Hecke-equivariant) monodromy exact sequence \cite{Gro72} (see also \cite[\ts 2, 3]{R90}, \cite{Ra91}, or \cite[\ts 4]{Il15}),
\[
\xymatrix{
0 \ar[r] & \cX \ar[r]^-\iota & \Hom(\cX^t, \Z) \ar[r] & \Phi_q(Nq) \ar[r] & 0.
}
\]
Here $\cX^t$ denotes the character group corresponding to the dual abelian variety of $J_0(Nq)$, which is equal to $J_0(Nq)$. 
Namely, $\cX^t=\cX$ as sets, but the action of the Hecke operator $T_\ell$ on $\cX^t$ is equal to the action of its dual $\xi_\ell$ on $\cX$ (cf. \cite{R88}, \cite[\ts 3]{R90} and \cite[\ts 7]{Em02}). 
Note that $\cX$ is the group of degree $0$ elements in $\Z^{\S(N)}$. For $s, t \in \S(N)$, let $e(s):=\frac{\#\Aut(s)}{2}$ and 
\[
\phi_s(t):=\begin{cases} e(s) & \text{ if } s=t, \\
 ~~~~~~ 0 & \text{otherwise,} \end{cases} 
\]
and extends via linearity, i.e., $\phi_s(\sum a_i t_i)=\sum a_i \phi_s(t_i)$.
Then, $\iota(s-t)=\phi_s-\phi_t$. 
Note also that $\Hom(\Z^{\S(N)}, \Z$) is generated by $\psi_s:=1/e(s) \phi_s$, 
and $\Hom(\cX^t, \Z)$ is its quotient by the relation:
\begin{equation*}
\sum_{s \in \S(N)} \psi_s=\sum_{s \in \S(N)} \frac{1}{e(s)}\phi_s=0.
\end{equation*}
(This is the minimal relation to make $\sum a_w \psi_w$ vanish for all the divisors of the form $s-t$, which are the generators of $\cX$.) For more details, see \cite[\ts 2, 3]{R90} or \cite{Ra91}.

In conclusion, the component group $\Phi_q(Nq)$ is isomorphic to 
\[
\Hom(\Z^{\S(N)}, \Z)/R,
\]
where $R$ is the set of relations:
\begin{equation}\label{eqn: relations}
R = \{ e(s)\psi_s = e(t)\psi_t  \text{ for any } s, t \in \S(N), \sum_{t \in \S(N)} \psi_t=0 \}.
\end{equation}
Let $\Psi_s$ denote the image of $\psi_s$ by the natural projection $\Hom(\Z^{\S(N)}, \Z) \to \Phi_q(Nq)$.
The Hecke operator $T_p$ acts on $\Hom(\Z^{\S(N)}, \Z)$ via the action of $\xi_p$ on $\Div(\S(N))$, i.e., 
\[
T_p(\psi_s)(t):=\psi_s(\xi_p(t))=\psi_{s}(\a_{p, *} \circ \b_p^*(t)).
\]
For $s \in \S(N)$, we temporarily denote $\a_p^*(s)= \sum_{i=1}^p A^i(s)$ and $\b_p^*(s)=\sum_{i=1}^p B^i(s)$ (allowing repetition). We note that if $e(s)=1$ then there is no repetition, i.e., $A^i(s) \not\simeq A^j(s)$ and $B^i(s) \not\simeq B^j(s)$ if $i \neq j$. If $e(s)=e>1$, then after renumbering the index properly we have
\begin{equation*}
e(A^i(s))=1 \text{ for } 1 \leq i \leq p-1 \qa e(A^p(s))=e.
\end{equation*}
Moreover, we have
\begin{equation*}
A^{e(k-1)+1}(s) \simeq \cdots \simeq A^{ek}(s) \text{ for } 1 \leq k \leq \frac{p-1}{e}, \text{ and } A^i(s) \not\simeq A^j(s) \text{ if } \left [\frac{i-1}{e} \right ] \neq \left [\frac{j-1}{e}\right ],
\end{equation*}
where $[x]$ denotes the largest integer less than or equal to $x$.
This can be seen as follows: Let $\s=\s_{2e}$, and let $s$ represent a pair $(\cE, C)$, where $C$ is a cyclic subgroup of $E$ of order $N$. Since $e(s)=e$, $\s(C)=C$. Suppose that $s' \in \S(Np)$ with $\a_{p, *}(s')=s$.
Then $s'$ represents a pair $(\cE, D)$ with $D[N]=C$.
If $\s(D)=D$, then $\Aut ([(\cE, D)])=\< \s \>$ and $(\cE, D) \not\simeq (\cE, D')$ if $D \neq D'$. 
(Note that there is a unique such $D$.)
On the other hand, if $\s(D) \neq D$ then 
\begin{equation*}
(\cE, D) \simeq (\cE, \s(D)) \simeq \cdots \simeq (\cE, \s^{e-1}(D)) \simeq (\cE, \s^e(D))=(\cE, D)
\end{equation*}
and $\Aut ([(\cE, D)])= \{ \pm 1 \}$. Thus, we can rearrange $A^i(s)$ as above. (Note that this can only be possible when $p \equiv 1 \pmod {2e}$, which is indeed true because $e(s)=e$.)

Now, we claim that $\phi_s(\a_{p, *}(t))=\phi_t(\a_p^*(s))$.
Indeed, $\phi_s(\a_{p,*}(t))$ is nonzero if and only if $t \in \{A^1(s), \dots, A^p(s) \}$. So, it suffices to show this equality when $t \in \{A^1(s), \dots, A^p(s) \}$. If $e(s)=1$, then there is no repetition and the claim follows clearly (both are $1$). Now, let $e(s)=e>1$. If $e(t)=1$, then $t=A^i(s)$ for some $1\leq i \leq p-1$. Since the number of repetition of $t=A^i(s)$ in $\{A^1(s), \dots, A^p(s) \}$ is $e$, the above equality holds.
If $e(t)=e$, then $t=A^p(s)$ and $\phi_s(\a_{p,*}(t))=e=\phi_t(\a_p^*(s))$, as claimed. 
Analogously, we have
\begin{equation*}
\phi_t(\b_{p, *}(s))=\phi_s(\b_p^*(t)).
\end{equation*}
More generally, we get
\begin{align*}
\phi_s(\a_{p,*}\circ \b_p^*(t))&=\sum_{i=1}^p \phi_s(\a_{p,*}(B^i(t)))
=\sum_{i=1}^p \sum_{j=1}^p \phi_{B^i(t)} (A^j(s))
=\sum_{j=1}^p \sum_{i=1}^p \phi_{A^j(s)} (B^i(t)) \\
&=\sum_{j=1}^p \phi_{A^j(s)} (\b_p^*(t))=\sum_{j=1}^p \phi_t(\b_{p,*}(A^j(s)))=\phi_t(\b_{p, *} \circ \a_p^*(s))=\phi_t(T_p(s)).
\end{align*}
If we set $T_p(s)=\sum s_i$, then $\phi_t(T_p(s))=\sum \phi_{s_i}(t)=\sum e(s_i) \psi_{s_i}(t)$ and hence for any $t \in \S(N)$,
\begin{equation*}
e(s)T_p(\psi_s)(t)=\phi_s(\a_{p,*}\circ \b_p^*(t))=\phi_t(T_p(s))=e(s_i)\psi_{s_i}(t).
\end{equation*}
In other words, we get
\begin{equation}
T_p(\Psi_s)=\frac{1}{e(s)} \sum e(s_i) \Psi_{s_i}.
\end{equation}

We can also define the action of $T_p$ on the component group via functorialities. Namely, let
\begin{equation*}
\xymatrix{
\Phi_q(Nq) \ar@<.5ex>[r]^-{\a_p^*} \ar@<-.5ex>[r]_{\b_p^*} & \Phi_q(Npq) 
\ar@<.5ex>[r]^-{\a_{p, *}} \ar@<-.5ex>[r]_{\b_{p, *}}& \Phi_q(Nq).
}
\end{equation*}
denote the maps functorially induced from the degeneracy maps\footnote{if $\a_p^*(s)=\sum t_j$ then $\a_p^*(\Psi_s)=\sum \Psi_{t_j}$ and if $\a_p(t)=s$ then $\a_{p, *}(\Psi_t)=e(s)/e(t)\Psi_s$; and similarly for $\b_p^*$ and $\b_{p,*}$}. Then, as before $T_p:=\b_{p, *} \circ \a_p^*$. Note that since the degrees of $\a_p$ and $\b_p$ are $p$, we have 
$\a_{p,*} \circ \a_p^* =\b_{p,*} \circ \b_p^* =p$.

\begin{lem}
$\a_{p, *} = \b_{p, *}$ on $\Phi_q(Npq)$.
\end{lem}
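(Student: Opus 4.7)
The plan is to work with the explicit generators $\Psi_s$ of $\Phi_q(Npq)$, indexed by $s \in \S(Np)$, and to verify $\alpha_{p,*}(\Psi_s) = \beta_{p,*}(\Psi_s)$ one generator at a time. Using the functorial pushforward formulas recorded just before the statement, the lemma reduces to comparing
\[
\frac{e(\alpha_p(s))}{e(s)}\, \Psi_{\alpha_p(s)} \qa \frac{e(\beta_p(s))}{e(s)}\, \Psi_{\beta_p(s)}
\]
as elements of $\Phi_q(Nq)$, for each $s \in \S(Np)$. The comparison then splits naturally according to whether $s$ carries an extra automorphism.

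When $e(s) = 1$, I exploit the defining relation \eqref{eqn: relations} of $\Phi_q(Nq)$: the element $\Phi := e(u)\,\Psi_u$ is independent of $u \in \S(N)$. Both displayed expressions then collapse to $\Phi$, so they coincide. This step is essentially Ribet's squarefree-level argument (cf.~\cite{R88}, \cite[\ts 4.2]{Ed91}), and uses no input about extra automorphisms.

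When $e(s) > 1$, so that $s \in \S_{2e}(Np)$ for some $e \in \{2,3\}$, the two pushforwards can no longer be collapsed in this trivial way, and Corollary \ref{cor: bijection alpha and alpha=beta} becomes the essential input. That corollary provides a bijection $\alpha_p : \S_{2e}(Np) \to \S_{2e}(N)$ (so in particular $e(\alpha_p(s)) = e(s)$, and likewise for $\beta_p$) together with an explicit isomorphism $\alpha_p(s) \simeq \beta_p(s)$ in $\S(N)$. Both formulas then reduce to the same element $\Psi_{\alpha_p(s)} = \Psi_{\beta_p(s)}$, completing the comparison.

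The main obstacle is precisely this $e(s) > 1$ case: here one genuinely needs the structural analysis of Section \ref{sec: supersingular}, in particular Proposition \ref{prop: degeneracy beta} and its Corollary \ref{cor: bijection alpha and alpha=beta}, to identify the two degeneracy images on supersingular points with $\Aut \neq \{\pm 1\}$. The $e(s) = 1$ case is routine bookkeeping once the pushforward formulas are in hand; all the novelty beyond the classical squarefree setting of Ribet and Edixhoven is concentrated in the extra-automorphism correction supplied by Section \ref{sec: supersingular}.
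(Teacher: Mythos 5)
Your proposal is correct and follows the same route as the paper: the case $e(s)=1$ is handled by the relation $e(t)\Psi_t=e(w)\Psi_w$ from \eqref{eqn: relations}, and the case $e(s)>1$ by Corollary \ref{cor: bijection alpha and alpha=beta}, which identifies $\a_p(s)$ with $\b_p(s)$ so that the two pushforwards literally coincide. This is exactly the case split and the key input used in the paper's own proof.
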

\begin{proof}
For $s \in \S_{2e}(Npq)$ with $e=2$ or $3$, $\a_p(s)=\b_p(s)$ by Remark \ref{cor: bijection alpha and alpha=beta}, and hence $\a_{p, *}(\Psi_s)=\b_{p, *}(\Psi_s)$. For $s \in \S_2(Npq)$, let $\a_p(s)=t$ and $\b_p(s)=w$. Then, $\a_{p, *}(\Psi_s)=e(t)\Psi_t = e(w)\Psi_w=\b_{p, *}(\Psi_s)$. In other words, for any $s \in \S(Npq)$, $\a_{p, *}(\Psi_s)=\b_{p, *}(\Psi_s)$. Since $\Psi_s$'s generates $\Phi_q(Npq)$, the result follows.
\end{proof}

In fact, Theorem \ref{thm: main theorem Tp} is an easy corollary of the above lemma.
\begin{proof}[Proof of Theorem \ref{thm: main theorem Tp}]
Since $\a_{p, *}=\b_{p, *}$ on $\Phi_q(Npq)$, we have
\begin{equation*}
T_p(\Psi_s)=\b_{p, *}\circ \a_p^*(\Psi_s)=\a_{p, *}\circ \a_p^*(\Psi_s)=p\Psi_s,
\end{equation*}
which implies the result.
\end{proof}

\section{The action of $T_q$ on the component group} \label{sec: Tq action}
In this section, we provide a complete description of the action of $T_q$ on the component group $\Phi_q(Nq)$. See Propositions \ref{prop: case 2}, \ref{prop: case 3} and \ref{prop: case 4}, which imply Theorem \ref{thm: main theorem Tq}.

Note that the Hecke operator $T_q$ acts on $\S(N)$ by the Frobenius morphism \cite[Proposition 3.8]{R90}, and the same is true for $\xi_q$. Since the Frobenius morphism is an involution on $\S(N)$ (cf. Proposition \ref{prop: frobenius on S4 S6}), we have
\begin{equation}
T_q(\psi_s)(t)=\psi_s(\xi_q(t))=\psi_s(\Frob (t))=\psi_{\Frob (s)}(t) \text{ for any } t \in \S(N),
\end{equation} 
which implies that $T_q(\psi_s)=\psi_{\Frob (s)}$.

From now on, if there is no confusion we remove $(N)$ from the notation for simplicity.
Let $n:=\frac{(q-1)Q}{12}$ (which is not necessarily an integer), and let $\Phi$ denote the cyclic subgroup of $\Phi_q(Nq)$ generated by $\Psi_\fs$ for a fixed $\fs \in \S_2$. (Note that this $\Phi$ is the same as that of Mazur and Rapoport \cite{MR77}, namely, $\Phi$ is equal to the cyclic subgroup generated by the image of the cuspidal divisor $(0)-(\infty)$.)

\subsection{Case 1: $(u, v)=(0, 0)$ or $\nu=0$} $~$

Let $e=1$ if $(u, v)=(0, 0)$ and $e=2u+3v$ if $(u, v)\neq (0, 0)$ and $\nu=0$. 
If $(u, v)=(0,0)$, $s_2=n$ and $s_4=s_6=0$. If $(u, v) \neq (0, 0)$ and $\nu=0$, then $s_{2e}=1$ and $s_2=\frac{en-1}{e}$. (Note that $s_2$ is an integer but $n$ is not.)
\begin{prop}\label{prop: case 1}
The component group $\Phi_q(Nq)$ is equal to $\Phi$, which is cyclic of order $en$. The Hecke operator $T_q$ acts on it by $1$. 
\end{prop}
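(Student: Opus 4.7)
The plan is to invoke the explicit presentation
\[
\Phi_q(Nq) \;=\; \Hom(\Z^{\S(N)},\Z)\big/R
\]
from Section~3, together with the mass formula \eqref{eqn: mass formula}, and to reduce the relations \eqref{eqn: relations} by hand. Under the hypothesis, the set $\S(N)\setminus\S_2(N)$ is either empty (when $(u,v)=(0,0)$) or has at most one point in each of $\S_4(N),\S_6(N)$ (when $\nu=0$), so the reduction is tractable.

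Suppose first $(u,v)=(0,0)$. Then $s_4=u\cdot 2^\nu=0$ and $s_6=v\cdot 2^\nu=0$, so every $s\in\S(N)$ satisfies $e(s)=1$; the relations $\Psi_s=\Psi_t$ collapse all generators into a common class $\Psi_\fs$, and the mass formula yields $s_2=n$. The sum relation then reads $n\Psi_\fs=0$, so $\Phi_q(Nq)\simeq\Z/n\Z=\Z/(en)\Z$ is cyclic and equals $\Phi$. Suppose next $\nu=0$ and (say) $(u,v)=(1,0)$ or $(0,1)$, and let $\fr$ denote the unique point in $\S_{2e}(N)$. The relation $\Psi_\fs=e\Psi_\fr$ reduces $\Phi_q(Nq)$ to the cyclic group generated by $\Psi_\fr$, and substituting into the sum relation with $s_2=(en-1)/e$ gives $en\cdot\Psi_\fr=0$, whence $\Phi_q(Nq)\simeq\Z/(en)\Z$. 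Crucially, the mass formula also forces $en\equiv-1\pmod e$, so $\gcd(e,en)=1$ and $\Psi_\fs=e\Psi_\fr$ is itself a generator; this yields $\Phi=\Phi_q(Nq)$.

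For the Hecke action, $T_q(\Psi_s)=\Psi_{\Frob(s)}$ by \cite[Proposition~3.8]{R90}. Frobenius preserves the automorphism type of each supersingular point, so it sends $\S_{2e}(N)$ to itself, and when $\nu=0$ must fix the unique exceptional point. On $\S_2(N)$ it merely permutes points whose $\Psi$-classes already coincide. Consequently $T_q$ fixes both families of generators, so $T_q$ acts trivially on $\Phi_q(Nq)$. The main obstacle is the bookkeeping showing that $\Psi_\fs$, and not merely $\Psi_\fr$, generates the whole cyclic group in the $\nu=0$ case; this rests on the coprimality $\gcd(e,en)=1$ extracted from the mass formula. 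A borderline sub-case with both $u=v=1$ (with $\nu=0$) would additionally require a $3\times3$ determinant computation on the presentation for $(\Psi_\fs,\Psi_{\fr_4},\Psi_{\fr_6})$ to confirm that the invariant factor is $6n$ and that $\Psi_\fs$ still generates.
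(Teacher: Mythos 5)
Your proof is correct and is essentially the paper's own argument: reduce the presentation $\Hom(\Z^{\S(N)},\Z)/R$ using the mass formula \eqref{eqn: mass formula}, obtain $en=es_2+1$ so that $\gcd(e,en)=1$ and $\Psi_\fs=e\Psi_\fr$ still generates, and use $T_q(\Psi_s)=\Psi_{\Frob(s)}$ together with the fact that $\Frob$ preserves automorphism type (hence fixes the unique exceptional point and permutes $\S_2$ within a single class). Two small notes: $en=es_2+1\equiv +1$, not $-1$, modulo $e$, which changes nothing; and the sub-case $(u,v)=(1,1)$, $\nu=0$ that you flag is precisely $N=1$, which the paper tacitly excludes by always assuming $N$ has a prime divisor.
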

\begin{proof}
First, we assume that $(u, v)=(0, 0)$. Then for any $s \in \S=\S_2$, $\Psi_s=\Psi_\fs$. Therefore $\Phi_q(Nq)=\Phi$ and $n\Psi_\fs=\sum_{s \in \S} \Psi_s=0$. 
Moreover, $T_q(\Psi_\fs)=\Psi_{s'}=\Psi_\fs$, where $s'=\Frob(\fs)$.

Now, we assume that $(u, v) \neq (0, 0)$ and $\nu=0$. In this case, either $N=2q$ (with $(u, v)=(1, 0)$ and $e=2$) or $N=3q$ (with $(u, v)=(0, 1)$ and $e=3$). In each case, let $z \in \S_{2e}$. Then
\begin{equation*}
\sum_{s \in \S_2}\Psi_s + \Psi_z=s_2 \Psi_\fs+\Psi_z=0 \qa \Psi_\fs=e \Psi_z.
\end{equation*}
Therefore the component group is generated by $\Psi_z$, and its order is $(es_2+1)=en$. Since $en=es_2+1$ is prime to $e$, this group is also generated by $\Psi_\fs=e\Psi_z$. (In fact, $\Psi_z=-s_2 \Psi_\fs$.) Moreover we have $T_q(\Psi_\fs)=\Psi_\fs$ as above.
\end{proof}

\subsection{Case 2: $(u, v)=(0, 1)$ and $\nu\geq 1$}$~$

In this case, $s_4=0$, $s_6=2^\nu$, and $s_2=\frac{3n-2^\nu}{3}$. Let $\S_6:=\{ t_1, t_2, \dots, t_{2^\nu} \}$. Here we assume that $\Frob(t_{2k-1})=t_{2k}$ for $1 \leq k \leq 2^{\nu-1}$.\footnote{By Proposition \ref{prop: frobenius on S4 S6}, we know that $\Frob$ is an involution of $\S_6$ without fixed points.} Let $t:=t_{2^\nu-1}$ and $t':=t_{2^\nu}$.

\begin{prop}\label{prop: case 2}
The component group $\Phi_q(Nq)$ decomposes as follows:
\[
\Phi_q(Nq)=\bigoplus_{i=0}^{2^\nu-1} B_i =: B_0 \oplus \bfB,
\]
where $B_0=\Phi$ is cyclic of order $3n$, and for $1\leq i \leq 2^\nu-1$, $B_i$ is cyclic of order $3$.
For $1\leq k \leq 2^{\nu-1}$, $B_{2k-1}$ and $B_{2k}$ are generated by 
\[
\mathbf{v}_{2k-1}:=\Psi_{t_{2k-1}}-\Psi_{t_{2k}} ~~\text{ and }~~\mathbf{v}_{2k}:=\Psi_{t_{2k-1}}+\Psi_{t_{2k}}-\Psi_t-\Psi_{t'}, \text{respectively}.
\]
The Hecke operator $T_q$ acts on $B_i$ by $(-1)^i$.
\end{prop}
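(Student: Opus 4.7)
The plan is to compute the structure of $\Phi_q(Nq)$ directly from the presentation in Section~3. Set $b_1 := \Psi_{t_1}$ and $c_i := \Psi_{t_i} - \Psi_{t_1}$ for $2 \le i \le 2^\nu$ (with the convention $c_1 := 0$). The relations $e(s)\psi_s = e(t)\psi_t$ from \eqref{eqn: relations} give $\Psi_\fs = 3b_1$ and $3c_i = 0$; the summation relation $s_2\Psi_\fs + \sum_i \Psi_{t_i} = 0$ becomes
\[
3n\, b_1 + \sum_{i=2}^{2^\nu} c_i = 0,
\]
using $3s_2 + 2^\nu = 3n$. Note that this identity also forces $3n \equiv 2^\nu \equiv (-1)^\nu \pmod 3$, so $3 \nmid 3n$. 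By a Smith normal form computation on the resulting $2^\nu\times 2^\nu$ relation matrix, $b_1$ has order $9n$, $\Psi_\fs = 3b_1$ has order $3n$, and $|\Phi_q(Nq)| = 3^{2^\nu}\,n$, recovering the count of Mazur--Rapoport. In particular $B_0 := \langle\Psi_\fs\rangle$ has no nonzero $3$-torsion.

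Next I identify $\Phi_q(Nq)[3]$. If $\alpha b_1 + \sum \beta_i c_i$ is $3$-torsion, then $3\alpha b_1 = 0$ forces $\alpha \in 3n\Z$, and $3n\,b_1 = -\sum c_i$ lets us rewrite the element as an $\F_3$-combination of the $c_i$'s. Moreover the $c_i$'s are $\F_3$-linearly independent in $\Phi_q(Nq)$: pulling back an $\F_3$-relation $\sum\epsilon_i c_i = 0$ to the free presentation, one cannot use the defining relation $3nb_1 + \sum c_i = 0$ (its $b_1$-coefficient is $3n\neq 0$), so the relation reduces to $\epsilon_i \in 3\Z$. Thus $\Phi_q(Nq)[3] = \langle c_2,\ldots,c_{2^\nu}\rangle \cong (\Z/3\Z)^{2^\nu - 1}$. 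To show $\{\mathbf{v}_1,\ldots,\mathbf{v}_{2^\nu - 1}\}$ is also an $\F_3$-basis, introduce the auxiliary elements $d_k := c_{2k-1} - c_{2k}$ and $e_k := c_{2k-1} + c_{2k}$ for $1 \le k \le 2^{\nu-1}$, which satisfy the single relation $d_1 + e_1 = 0$. One checks
\[
\mathbf{v}_{2k-1} = d_k, \qquad \mathbf{v}_{2k} = e_k - e_{2^{\nu-1}},
\]
and inverts this triangular system (solving $e_{2^{\nu-1}} = -\mathbf{v}_1 - \mathbf{v}_2$ first, then $e_k = \mathbf{v}_{2k} - \mathbf{v}_1 - \mathbf{v}_2$ for $2 \le k \le 2^{\nu-1} - 1$) to recover every $d_k$ and $e_k$. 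Hence $\bfB := \sum_{i=1}^{2^\nu - 1}\langle\mathbf{v}_i\rangle$ equals $\Phi_q(Nq)[3] = \bigoplus_{i=1}^{2^\nu - 1} B_i$ with each $B_i = \langle\mathbf{v}_i\rangle \cong \Z/3\Z$.

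The decomposition $\Phi_q(Nq) = B_0 \oplus \bfB$ follows because $B_0 \cap \bfB \subseteq B_0[3] = 0$ and $|B_0|\cdot|\bfB| = 3n\cdot 3^{2^\nu - 1} = |\Phi_q(Nq)|$. For the $T_q$-action, recall $T_q(\Psi_s) = \Psi_{\Frob(s)}$; since $\Frob$ preserves $\S_2$, $T_q$ acts as the identity on $B_0$, and since $\Frob$ swaps $t_{2k-1}\leftrightarrow t_{2k}$ by Proposition~\ref{prop: frobenius on S4 S6}, direct substitution yields $T_q(\mathbf{v}_{2k-1}) = -\mathbf{v}_{2k-1}$ and $T_q(\mathbf{v}_{2k}) = \mathbf{v}_{2k}$, i.e.\ $T_q = (-1)^i$ on $B_i$. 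The main technical step is the $\F_3$-basis change in the middle paragraph; everything else is bookkeeping once the presentation has been reduced.
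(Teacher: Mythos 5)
Your proof is correct and follows essentially the same route as the paper's: both work directly from the presentation \eqref{eqn: relations}, use that $3n=3s_2+2^\nu$ is prime to $3$ to split off the order-$3n$ part, and read off the $T_q$-action from $T_q(\psi_s)=\psi_{\Frob(s)}$ together with Proposition \ref{prop: frobenius on S4 S6}. The only difference is organizational: you identify $\bfB$ with the full $3$-torsion subgroup and check that the $\mathbf{v}_i$ form an $\F_3$-basis via the $d_k,e_k$ change of variables, whereas the paper passes through the intermediate decomposition $\langle 3\Psi_t\rangle\oplus\langle (3s_2+2^\nu)\Psi_t\rangle\oplus\bigoplus_i\langle\Psi_{t_i}-\Psi_t\rangle$ and lists explicit change-of-basis identities.
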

\begin{proof}
Note that $\Psi_s=3\Psi_{t_i}=3\Psi_{t_j}$ for all $i, j$ and $\sum_{i=1}^{2^\nu} \Psi_{t_i} + s_2 \Psi_s=0$. Therefore $\Phi_q(Nq)$ is generated by $\Psi_{t_i}$ for $1\leq i \leq 2^\nu-1$. The order of each group $\<\Psi_{t_i}\>$ is $9n$ because
\[
9n \Psi_{t_i}= 3s_2(3\Psi_{t_i})+\sum_{i=1}^{2^\nu} 3\Psi_{t_i}=3\left(\sum_{s \in \S_2} \Psi_s +\sum_{i=1}^{2^\nu} \Psi_{t_i} \right)=0,
\]
and $9n$ is the smallest positive integer to make this happen.
Moreover $\< \Psi_{t_i} \> \cap \< \Psi_{t_j} \>$ is of order $3n$ for any $i\neq j$. 
Since $3n=3s_2+2^\nu$ is prime to $3$, we can decompose the component group into
\begin{equation}\label{equation}
\< 3\Psi_t \> \oplus \<(3s_2+2^\nu) \Psi_t \> \bigoplus_{i=1}^{2^\nu-2} \< \Psi_{t_i}-\Psi_t \>. 
\end{equation}
Since $\Psi_s=3\Psi_{t_i}=3\Psi_t=3\Psi_{t'}$ for any $i$ and $\sum_{i=1}^{2^\nu} \Psi_{t_i}=-3s_2\Psi_t$, we have
\[
\Psi_{2k-1}-\Psi_t=2\mathbf{v}_{2k-1}+2\mathbf{v}_{2k}+\mathbf{v}_{2^{\nu}-1};
\]
\[
\Psi_{2k}-\Psi_t=\mathbf{v}_{2k-1}+2\mathbf{v}_{2k}+\mathbf{v}_{2^{\nu}-1};
\]
\[
(3s_2+2^\nu)\Psi_t=\sum_{i=1}^{2^\nu} (\Psi_{t}-\Psi_{t_i})
=-\sum_{k=1}^{2^{\nu-1}} \mathbf{v}_{2k}-(-1)^\nu \mathbf{v}_{2^\nu-1}.
\]
Therefore the decomposition in the proposition is isomorphic to (\ref{equation}). The action of $T_q$ on each $B_i$ is obvious from its construction.
\end{proof}

\subsection{Case 3: $(u, v)=(1, 0)$ and $\nu\geq 1$}$~$

Note that $s_4=2^\nu$, $s_6=0$, and $s_2=n-2^{\nu-1}$.
Let $\S_4=\{w_1, w_2, \dots, w_{2^\nu} \}$. As before, we assume that $\Frob(w_{2k-1})=w_{2k}$ for $1 \leq k \leq 2^{\nu-1}$.\footnote{By Proposition \ref{prop: frobenius on S4 S6}, we know that $\Frob$ is an involution of $\S_4$ without fixed points.} Let $w:=w_{2^\nu-1}$ and $w':=w_{2^\nu}$. 

\begin{prop} \label{prop: case 3}
The component group $\Phi_q(Nq)$ decomposes as follows:
\[
\Phi_q(Nq)=\bigoplus_{i=0}^{2^\nu-2} A_i=A_0 \oplus \bfA,
\]
where $A_0$ is cyclic of order $4n$ generated by $\Psi_w$, and for $1\leq i \leq 2^\nu-2$, $A_i$ is cyclic of order $2$. 
For $1\leq k \leq 2^{\nu-1}-2$, $A_{2k-1}$ and $A_{2k}$ are generated by 
\[
\mathbf{u}_{2k-1}:=\Psi_{w_{2k-1}}-\Psi_w ~~\text{ and }~~\mathbf{u}_{2k}:=\Psi_{w_{2k-1}}+\Psi_{w_{2k}}-\Psi_w-\Psi_{w'}, ~\text{respectively}.
\]
And $A_{2^\nu-3}$ and $A_{2^\nu-2}$ are generated by 
\[
\mathbf{u}_{2^\nu-3} :=\Psi_{w_{2^\nu-3}}-\Psi_w ~~\text{ and }~~\mathbf{u}_{2^\nu-2}:=\Psi_{w_{2^\nu-3}}-\Psi_{w_{2^\nu-2}}, ~\text{respectively}.
\]
Moreover, the action of the Hecke operator $T_q$ on each group as follows:
\[
T_q(\Psi_w)=(1+2n)\Psi_w+\sum_{i=1}^{2^{\nu-1}-1} \mathbf{u}_{2i};
\]
\[
T_q(\mathbf{u}_{2k-1})=\mathbf{u}_{2k-1}+\mathbf{u}_{2k} \qa T_q(\mathbf{u}_{2k})=\mathbf{u}_{2k} ~~\text{ for }~ 1 \leq k \leq 2^{\nu-1}-2;
\]
\[
T_q(\mathbf{u}_{2^\nu-3})=2n \Psi_w+\mathbf{u}_{2^\nu-3}+\sum_{i=1}^{2^{\nu-1}-2} \mathbf{u}_{2i} \qa T_q(\mathbf{u}_{2^\nu-2})=\mathbf{u}_{2^\nu-2}.
\]
\end{prop}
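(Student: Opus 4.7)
The proof proceeds in three stages: (i) determine the abstract abelian-group structure of $\Phi_q(Nq)$ from \eqref{eqn: relations}; (ii) verify that $\Psi_w$ together with $\mathbf{u}_1, \dots, \mathbf{u}_{2^\nu - 2}$ realize the claimed direct sum decomposition; and (iii) compute the $T_q$-action using $T_q(\Psi_s) = \Psi_{\Frob(s)}$ and Proposition \ref{prop: frobenius on S4 S6}. The overall shape parallels Case 2 (Proposition \ref{prop: case 2}), but because $2n$ is always even, the cyclic summand $A_0 \cong \Z/4n\Z$ cannot be split off from an extra $\Z/2\Z$ factor as $B_0 \cong \Z/3n\Z$ was in Case 2; this accounts for $\bfA$ having one fewer summand than $\bfB$ did, and for the asymmetric generators $\mathbf{u}_{2^\nu-3}$, $\mathbf{u}_{2^\nu-2}$.

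For stage (i), I set $x_i := \Psi_{w_i} - \Psi_w$ with the convention $x_{2^\nu - 1} = 0$. The relations $e(s)\Psi_s = e(t)\Psi_t$ force $\Psi_\fs = 2\Psi_{w_i}$ and hence $2x_i = 0$, while $\sum_{t \in \S} \Psi_t = 0$ becomes $2n\,\Psi_w + \sum_{i \neq 2^\nu-1} x_i = 0$ using $2s_2 + 2^\nu = 2n$. Eliminating $x_{2^\nu}$ converts its own 2-torsion constraint into $4n\,\Psi_w = 0$, yielding $\Phi_q(Nq) \cong \Z/4n\Z \oplus (\Z/2\Z)^{2^\nu - 2}$ freely on $\{\Psi_w, x_1, \dots, x_{2^\nu-2}\}$. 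For stage (ii), I rewrite the proposed generators in this basis as $\mathbf{u}_{2k-1} = x_{2k-1}$, $\mathbf{u}_{2^\nu-3} = x_{2^\nu-3}$, $\mathbf{u}_{2^\nu-2} = x_{2^\nu-3} + x_{2^\nu-2}$, and $\mathbf{u}_{2k} = x_{2k-1}+x_{2k}-x_{2^\nu} = 2n\,\Psi_w + \sum_{j \in \{1,\dots,2^\nu-2\}\setminus\{2k-1,2k\}} x_j$ for $1 \le k \le 2^{\nu-1}-2$. The key identity
\[
\sum_{k=1}^{2^{\nu-1}-2} \mathbf{u}_{2k} \,+\, \mathbf{u}_{2^\nu-2} \;=\; \sum_{i=1}^{2^\nu-2} x_i,
\]
valid for $\nu \ge 2$ since $2^{\nu-1}-1$ is odd and $2n(2^{\nu-1}-2) \equiv 0 \pmod{4n}$, expresses each $x_i$ in terms of $\Psi_w$ and the $\mathbf{u}_j$'s. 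Hence the natural surjection $\Z/4n\Z \oplus (\Z/2\Z)^{2^\nu-2} \twoheadrightarrow \Phi_q(Nq)$ has equal source and target cardinality, so it is an isomorphism and $A_0 \oplus \bfA$ is a direct sum with the claimed orders.

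For stage (iii), $T_q(\Psi_s) = \Psi_{\Frob(s)}$ fixes $\Psi_\fs$ for $s \in \S_2$ and swaps $\Psi_{w_{2k-1}} \leftrightarrow \Psi_{w_{2k}}$ by Proposition \ref{prop: frobenius on S4 S6}. This yields $T_q(\mathbf{u}_{2k}) = \mathbf{u}_{2k}$ by the symmetry of the defining expression, and $T_q(\mathbf{u}_{2^\nu-2}) = -\mathbf{u}_{2^\nu-2} = \mathbf{u}_{2^\nu-2}$ since the element is 2-torsion; and $T_q(\mathbf{u}_{2k-1}) = \Psi_{w_{2k}} - \Psi_{w'}$ simplifies to $\mathbf{u}_{2k-1} + \mathbf{u}_{2k}$ after adding $-2\mathbf{u}_{2k-1} = 0$. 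The two remaining formulas come from substituting $x_{2^\nu} = -2n\,\Psi_w - \sum_{i=1}^{2^\nu-2} x_i$ into $T_q(\Psi_w) = \Psi_w + x_{2^\nu}$ and $T_q(\mathbf{u}_{2^\nu-3}) = x_{2^\nu-2} - x_{2^\nu}$, and then converting the resulting $\sum x_i$'s back into $\mathbf{u}$'s via the displayed identity.

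The main obstacle is the asymmetric bookkeeping around the reference pair $(w, w')$. The symmetric template $\Psi_{w_{2k-1}} + \Psi_{w_{2k}} - \Psi_w - \Psi_{w'}$ degenerates to $0$ at $k = 2^{\nu-1}$, so the last 2-torsion direction must be supplied by the adjacent pair $(w_{2^\nu-3}, w_{2^\nu-2})$ via the asymmetric generators $\mathbf{u}_{2^\nu-3}$ and $\mathbf{u}_{2^\nu-2}$; this asymmetry is precisely what produces the exceptional $2n\,\Psi_w$ term in $T_q(\mathbf{u}_{2^\nu-3})$, absent from the ordinary $T_q(\mathbf{u}_{2k-1})$. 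Tracking the $\Psi_w$-coefficients and 2-torsion parts through the parity argument behind the displayed identity---in particular, the fact that $2n(2^{\nu-1}-2) \equiv 0 \pmod{4n}$ forces the $4n$-part of the key sum to vanish---is the delicate step where a sign or index error would yield the wrong Hecke formula.
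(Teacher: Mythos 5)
Your proposal is correct and follows essentially the same route as the paper: read off the group structure from the relations \eqref{eqn: relations} (giving $\Z/4n\Z$ on $\Psi_w$ plus $(\Z/2\Z)^{2^\nu-2}$), exhibit the change of basis between the $\Psi_{w_i}-\Psi_w$ and the $\mathbf{u}_i$, and compute $T_q$ from the Frobenius involution on $\S_4$. The paper compresses the first two steps into a ``mutatis mutandis'' reference to Proposition \ref{prop: case 2} plus the key identity $\Psi_w-\Psi_{w'}=2n\Psi_w+\sum_{i=1}^{2^{\nu-1}-1}\mathbf{u}_{2i}$, which is exactly the substitution $x_{2^\nu}=-2n\Psi_w-\sum x_i$ you make explicit.
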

\begin{proof}
  The argument in Proposition \ref{prop: case 2} applies \textit{mutatis mutandis}. For instance, when $\nu\geq 2$ an isomorphism between $A_0 \bigoplus_{i=1}^{2^\nu-2} \<\Psi_{w_i}-\Psi_w\>$ and $A_0\oplus \bfA$ can be given by the following data: 
for $1 \leq k \leq 2^{\nu-1}-2$, 
\[
\Psi_{w_{2k}}-\Psi_w=\mathbf{u}_{2k}+\mathbf{u}_{2k-1}+(\Psi_{w'}-\Psi_w) \text{ and } 
\Psi_{w}-\Psi_{w'}=2n \Psi_w+\sum_{i=1}^{2^{\nu-1}-1} \mathbf{u}_{2i};
\]
\[
\Psi_{w_{2^\nu-2}}-\Psi_w=\mathbf{u}_{2^\nu-3}+\mathbf{u}_{2^\nu-2}.
\]
The action of Hecke operator $T_q$ on each $A_i$ is clear except 
\[
T_q(\Psi_w)=\Psi_{w'}=\Psi_{w}-(\Psi_{w}-\Psi_{w'})=(1+2n)\Psi_w+\sum_{i=1}^{2^{\nu-1}-1} \mathbf{u}_{2i},
\]
\[
T_q(\mathbf{u}_{2^\nu-3})=\Psi_{w_{2^\nu-2}}-\Psi_{w'}=\mathbf{u}_{2^\nu-3}+\mathbf{u}_{2^\nu-2}+(\Psi_{w}-\Psi_{w'})=2n\Psi_{w}+\mathbf{u}_{2^\nu-3} + \sum_{i=1}^{2^{\nu-1}-2} \mathbf{u}_{2i}.
\]
\end{proof}

\subsection{Case 4: $(u, v)=(1, 1)$ and $\nu\geq 1$}$~$

Note that $s_4=s_6=2^\nu$ and $s_2=\frac{6n-5\cdot 2^\nu}{6}$.
Let $\S_4=\{w_1, \dots, w_{2^\nu} \}$ and $\S_6:=\{ t_1,  \dots, t_{2^\nu} \}$. As before, we assume that $\Frob(w_{2k-1})=w_{2k}$ and $\Frob(t_{2k-1})=t_{2k}$ for $1 \leq k \leq 2^{\nu-1}$. Let $w:=w_{2^\nu-1}$ and $w':=w_{2^\nu}$. Also, let $t:=t_{2^\nu-1}$ and $t':=t_{2^\nu}$.

\begin{prop}\label{prop: case 4}
The component group $\Phi_q(Nq)$ decomposes as follows:
\[
\Phi_q(Nq)=A_0 \oplus \bfA \oplus \bfB,
\]
where $A_0$ is cyclic of order $12n$ generated by $\Psi_w$.
The structures of $\bfA$ and $\bfB$ are the same as those in Propositions \ref{prop: case 2} and \ref{prop: case 3}. The actions of $T_q$ on $\bfA$ and $\bfB$ are the same as before except on $A_{2^\nu-3}$ (when $\nu \geq 2$), where $T_q$ acts by
\[
T_q(\mathbf{u}_{2^\nu-3})=6n \Psi_w+\mathbf{u}_{2^\nu-3}+\sum_{i=1}^{2^{\nu-1}-2} \mathbf{u}_{2i}.
\]
Moreover, the action of $T_q$ on $A_0$ is analogous as before:
\[
T_q(\Psi_w)=(1+6n)\Psi_w+\sum_{i=1}^{2^{\nu-1}-1} \mathbf{u}_{2i}.
\]
\end{prop}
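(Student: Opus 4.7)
The plan is to combine the strategies of Propositions \ref{prop: case 2} and \ref{prop: case 3}: since both $\S_4$ and $\S_6$ are non-empty, $\Phi_q(Nq)$ should acquire the 2-torsion summand $\bfA$ from $\S_4$ and the 3-torsion summand $\bfB$ from $\S_6$, while the cyclic piece $A_0 = \<\Psi_w\>$ is enlarged to order $12n$ by the simultaneous contribution of both types of extra automorphisms. First, use $\Psi_\fs = 2\Psi_{w_i} = 3\Psi_{t_j}$ together with $\sum_{s\in\S}\Psi_s = 0$ to reduce generators to the $\Psi_{w_i}$'s and $\Psi_{t_j}$'s, obtaining the single global relation
\begin{equation}
2^\nu\Psi_w + \sum_{i=1}^{2^\nu}(\Psi_{w_i}-\Psi_w) + 2^\nu\Psi_t + \sum_{j=1}^{2^\nu}(\Psi_{t_j}-\Psi_t) + 2s_2 \Psi_w = 0 \tag{$\star$}
\end{equation}
alongside the torsion relations $2(\Psi_{w_i}-\Psi_w)=0$ and $3(\Psi_{t_j}-\Psi_t)=0$. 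Multiplying $(\star)$ by $6$ annihilates every difference, and substituting $6\Psi_t = 2\cdot 3\Psi_t = 4\Psi_w$ together with $6s_2 = 6n - 5\cdot 2^\nu$ yields $(6\cdot 2^\nu + 4\cdot 2^\nu + 12s_2)\Psi_w = 12n\Psi_w = 0$.

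Next, build the decomposition. Because the 2-primary and 3-primary parts of $\Phi_q(Nq)$ are orthogonal, $\bfA$ and $\bfB$ can be split off one prime at a time. Working $2$-locally, $(\star)$ reduces modulo the 3-torsion differences (absorbing $2^\nu\Psi_t$ via $3\Psi_t = 2\Psi_w$) to the relation driving Proposition \ref{prop: case 3}, giving the 2-part of $A_0$ together with the whole of $\bfA$; working $3$-locally, it reduces to the relation of Proposition \ref{prop: case 2}, giving the 3-part of $A_0$ together with the whole of $\bfB$. The bases $\{\mathbf{u}_i\}$ and $\{\mathbf{v}_j\}$ then transport verbatim from those propositions, $\Psi_w$ has exact order $12n$, and the claimed direct-sum $A_0 \oplus \bfA \oplus \bfB$ follows.

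Finally, compute the $T_q$ action. Since $T_q$ is compatible with the 2- and 3-primary projections, its action on $\bfB$ is exactly as in Proposition \ref{prop: case 2} and its action on $\mathbf{u}_{2k-1},\mathbf{u}_{2k}$ for $1\le k\le 2^{\nu-1}-2$ is exactly as in Proposition \ref{prop: case 3}. The only new computation concerns $A_0$ and $A_{2^\nu-3}$, both of which require re-expanding $\Psi_w - \Psi_{w'}$ in the new basis. Projecting $(\star)$ modulo $\bfB$ and using $3\Psi_t = 2\Psi_w$, the extra $2^\nu\Psi_t$ term now contributes an additional $\tfrac{2}{3}\cdot 2^\nu\Psi_w$ after clearing denominators in the 2-primary part, and a short calculation parallel to that of Proposition \ref{prop: case 3} gives $\Psi_w - \Psi_{w'} = 6n\Psi_w + \sum_{i=1}^{2^{\nu-1}-1}\mathbf{u}_{2i}$. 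This produces the stated coefficients $(1+6n)$ and $6n$ in place of the $(1+2n)$ and $2n$ of Proposition \ref{prop: case 3}; every other entry of the $T_q$ matrix is unchanged from the earlier cases.

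The main obstacle will be in the middle step: confirming that $\<\Psi_w\>$ has exact order $12n$ and that $\bfA, \bfB$ split off cleanly. The relation $(\star)$ forces certain 2-torsion and 3-torsion classes to be absorbed into $\<\Psi_w\>$ (a copy of $\Psi_t$ appears inside $\Psi_w-\Psi_{w'}$, for example), and one must verify that after passing to the primary parts the remaining relations among the $\mathbf{u}_i$ and $\mathbf{v}_j$ are precisely those of Propositions \ref{prop: case 2} and \ref{prop: case 3} with no extra collapse. Coprimality of the 2- and 3-primary orders makes this independence essentially automatic, but the bookkeeping of which classes live in which summand is the delicate heart of the proof.
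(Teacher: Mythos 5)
Your proposal is correct and follows essentially the same route as the paper: both reduce Case 4 to the computations of Propositions \ref{prop: case 2} and \ref{prop: case 3} using the coprimality of the $2$- and $3$-parts, and both hinge on the same global relation giving $12n\Psi_w=0$ together with the key identity $\Psi_w-\Psi_{w'}=6n\Psi_w+\sum_{i=1}^{2^{\nu-1}-1}\mathbf{u}_{2i}$, which produces the modified coefficients $(1+6n)$ and $6n$. The only cosmetic difference is that you organize the splitting by localizing at $2$ and $3$, whereas the paper makes it explicit by computing the orders of $\langle\Psi_{w_i}\rangle$ and $\langle\Psi_{t_i}\rangle$ and writing $\langle\Psi_t\rangle=\langle 3\Psi_t\rangle\oplus\langle 6n\Psi_t\rangle$ before exhibiting the direct-sum decomposition of generators directly.
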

\begin{proof}
Note that from (\ref{eqn: relations}) we have
\[
s_2\Psi_s+\Psi_{w_1}+\cdots+\Psi_{w'}+\Psi_{t_1}+\cdots+\Psi_{t'}=0.
\]
Multiplying by $3$, we have
\begin{equation}\label{eqn: 2}
\Psi_{w_1}+\cdots+\Psi_{w'}=-(3s_2+2\cdot 2^{\nu})\Psi_s=-(6s_2+4 \cdot 2^\nu)\Psi_w.
\end{equation}
Also, multiplying by $4$, we have
\begin{equation}\label{eqn: 3}
\Psi_{t_1}+\cdots+\Psi_{t'}=-(4s_2+3\cdot 2^\nu)\Psi_s=-(12s_2+9\cdot 2^{\nu})\Psi_t.
\end{equation}
Therefore $\Psi_{w_1}, \dots, \Psi_w, \Psi_{t_1}, \dots, \Psi_t$ can generate the whole group.
By the similar computation, the order of $\< \Psi_{w_i} \>$ is $12n$ and the order of $\<\Psi_{t_i} \>$ is $18n$. All of them contain $\Phi$ as a subgroup, which is of order $6n$. Here we note that $\< \Psi_t \>=\< 3\Psi_t \> \oplus \< 6n\Psi_{t} \>$ because $6n=6s_2+5\cdot 2^\nu$ is prime to $3$.  
Therefore we can decompose $\Phi_q(Nq)$ into
\begin{equation}\label{eqn : 5}
\< \Psi_w \> \bigoplus_{i=1}^{2^\nu-2} \< \Psi_{w_i}-\Psi_w \> \bigoplus_{i=1}^{2^\nu-2} \< \Psi_{t_i}-\Psi_t \>\bigoplus \< 6n\Psi_t \>.
\end{equation}
As in Propositions \ref{prop: case 2} and \ref{prop: case 3}, we can find an isomorphism between (\ref{eqn : 5}) and $A_0\oplus\bfA \oplus \bfB$, which proves the first part. 
From (\ref{eqn: 2}) (and the previous discussions) we have
\[
\Psi_{w}-\Psi_{w'}= (6s_2+5\cdot 2^\nu)\Psi_w+\sum_{i=1}^{2^{\nu-1}-1} \mathbf{u}_{2i}=6n\Psi_w+\sum_{i=1}^{2^{\nu-1}-1} \mathbf{u}_{2i}.
\]
The action of $T_q$ on each components is also obvious except
\[
T_q(\Psi_w)=\Psi_{w'}=\Psi_w-(\Psi_{w}-\Psi_{w'})=(1+6n)\Psi_w+\sum_{i=1}^{2^{\nu-1}-1} \mathbf{u}_{2i} ~~\text{ and}
\]
\[
T_q(\mathbf{u}_{2^\nu-3})=\Psi_{w_{2^\nu-2}}-\Psi_{w'}=\mathbf{u}_{2^\nu-3}+\mathbf{u}_{2^\nu-2}+(\Psi_{w}-\Psi_{w'})=6n\Psi_{w}+\mathbf{u}_{2^\nu-3} + \sum_{i=1}^{2^{\nu-1}-2} \mathbf{u}_{2i}.
\]
\end{proof}

\subsection*{Acknowledgements}
The second author would like to thank Kenneth Ribet for a number of very helpful discussions about Eisenstein ideals and component groups. The anonymous referee deserves special thanks for a thorough reading of the manuscript and for many useful comments and suggestions.
This work was supported by IBS-R003-D1.

\bibliographystyle{annotation}

\end{document}